\newcommand{\C}{{\mathbb C}}
\newcommand{\N}{{\mathbb N}}
\newcommand{\R}{{\mathbb R}}
\newcommand{\Z}{{\mathbb Z}}
\newcommand{\abs}[2][\empty]{\ifx#1\empty\left|#2\right|%
\else#1\vert #2 #1\vert\fi}
\newcommand{\caninf}{\alpha}
\newcommand{\Cnt}[1][]{{\cal C}^{#1}}
\newcommand{\conv}{\star}
\newcommand{\csubset}{\subset\subset}
\newcommand{\defstyle}[1]{{\em #1}}
\newcommand{\eps}{\varepsilon}
\newcommand{\FinPart}[2][\empty]{\ifx#1\empty\mathrm{Pf}\left(#2\right)%
\else\mathrm{Pf}#1(#2#1)\fi}
\newcommand{\integr}{\int\limits}
\newcommand{\lspan}{\mathop\mathrm{span}}
\newcommand{\meas}{\mu}
\newcommand{\norm}[2][\empty]{\ifx#1\empty\left\Vert#2\right\Vert%
\else#1\Vert #2 #1\Vert\fi}
\newcommand{\pairing}[3][\empty]{\ifx#1\empty\left\langle#2,#3\right\rangle%
\else#1\langle #2,#3 #1\rangle\fi}
\newcommand{\restr}[2]{{#1}_{|#2}}
\newcommand{\supp}{\mathop\mathrm{supp}}
\newcommand{\Schwartz}{\mathscr{S}}
\newcommand{\test}{\mathcal D}
\newcommand{\Gen}{{\mathcal G}}
\newcommand{\GenC}{\widetilde\C}
\newcommand{\GenR}{\widetilde\R}
\newcommand{\Mod}{{\mathcal M}}
\newcommand{\Null}{{\mathcal N}}
\newtheorem*{df}{Definition}
\newtheorem{thm}{Theorem}[section]
\newtheorem{prop}[thm]{Proposition}
\newtheorem{lemma}[thm]{Lemma}
\newtheorem{cor}[thm]{Corollary}
\newtheorem{example}[thm]{Example}
\theoremstyle{remark}
\newtheorem*{rem}{Remark}
\begin{document}
\title{Weak homogeneity in generalized function algebras}
\author{Hans Vernaeve\footnote{Supported by FWF(Austria), grants M949, Y237}\\Unit for Engineering Mathematics\\University of Innsbruck, A-6020 Innsbruck, Austria}
\date{}
\maketitle
\abstract{In this paper, weakly homogeneous generalized functions in the special Colom\-beau algebras are determined up to equality in the sense of generalized distributions. This yields characterizations that are formally similar to distribution theory. Further, we give several characterizations of equality in the sense of generalized distributions in these algebras.}

\section{Introduction}
In \cite{homog}, homogeneity in algebras of generalized functions on $\R^d$ and on $\R^d\setminus\{0\}$ is investigated. Such algebras have been developed by many authors \cite{AR, DHPV04, GKOS, NPS} mainly inspired by the work of J.-F.\ Colombeau \cite{Colombeau84}, and have proved valuable as a tool for treating partial differential equations with singular data or coefficients (see \cite{MO92} and the references therein). In \cite{homog}, the attention is focused on the special Colombeau algebra: the existence of embeddings of the space of distributions $\test'$ with optimal consistency properties into this algebra \cite[\S 1.1--1.2]{GKOS} allows to compare homogeneity of generalized functions with the distributional homogeneity.

A result of this investigation is that the class of generalized functions (called \emph{strongly homogeneous}) satisfying a homogeneous equation in the sense of the usual equality in the algebra, is surprisingly restrictive: on the space $\R^d$, the only strongly homogeneous generalized functions are polynomials with generalized coefficients. Hence the embedded images of homogeneous distributions fail in general to be strongly homogeneous. For that reason, generalized functions (called \emph{weakly homogeneous}) satisfying a homogeneous equation \emph{in the sense of generalized distributions} \cite[\S 7.5]{Colombeau84}, i.e., when acting on (smooth, compactly supported, non-generalized) test functions, were considered in \cite{homog} and were shown to include the embedded images of homogeneous distributions.

The purpose of this paper is to characterize and study in detail the weakly homogeneous generalized functions.

We now describe our main results. On $\R$, the weakly homogeneous generalized functions of degree $\alpha\in\R\setminus\{-1,-2,\dots\}$ are, up to equality in the sense of generalized distributions, of the form $c_1\iota(x^\alpha_-) + c_2\iota(x^\alpha_+)$,
where $\iota$ denotes an embedding of $\test'$ into the Colombeau algebra and $c_1, c_2$ are generalized constants,
and those weakly homogeneous of degree $\alpha\in\{-1,-2,\dots\}$ are, up to equality in the sense of generalized distributions, of the form
$c_1 D^{-\alpha-1} \iota(\delta) + c_{2} \iota(x^\alpha)$
(Theorem \ref{thm_weak_hom_char_dim1}).\\
A weakly homogeneous generalized function $f$ on $\R^d\setminus\{0\}$ is, up to equality in the sense of generalized distributions, of the form
$g({x}/{\abs{x}})\abs{x}^\alpha$,
where $g\in\Gen(\R^d\setminus\{0\})$ is a radial mean of $f$ (Theorem \ref{thm_weak_hom_char_dimd}).
Further, a generalized function is shown to be weakly homogeneous if and only if it satisfies the corresponding Euler differential equation in the sense of generalized distributions (Theorem \ref{thm_weak_Euler}).

Let us emphasize that weakly homogeneous generalized functions are not assumed to be associated with a distribution \cite[\S 1.2.6]{GKOS}; hence these results cannot be obtained as a consequence of distribution theory (moreover, we will show that certain properties of distributions usually used to characterize homogeneous distributions do not hold in this more general setting (Examples \ref{ex_density}, \ref{ex_weak_point_support})). Instead, we develop other techniques using the uniform boundedness principle and properties of the Fourier transform in sections \ref{section_uniform_bdedness} and \ref{section_Fourier}. This allows us to obtain characterizations for the equality in the sense of generalized distributions (Theorem \ref{thm_strongly_associated}, Theorem \ref{thm_weakly_zero_char}).
We also indicate that some of our results can be obtained in more general sequence space algebras of generalized functions \cite{DS88,DHPV04}.

\section{Preliminaries}
\begin{df}
We call a sequence $(a_n)_{n\in\N}$ of maps $(0,1]\to \R^+$ an \defstyle{(asymptotic) scale} if
\begin{gather}
(\forall c\in\R^+) (\exists N\in\N) (a_N(\eps)\le c, \text{ for small }\eps)\\
(\forall n\in\N) (a_{n+1}(\eps)\le a_n(\eps), \text{ for small }\eps)\\
(\forall n,m\in\N) (\exists N\in\N) (a_N(\eps) \le a_n(\eps) a_m(\eps), \text{ for small } \eps).
\end{gather}
\end{df}
This definition is a slight generalization of the definition of asymptotic scale in \cite{DS88} with the purpose to also allow $a_n=1/n$ as a scale.
We also adopt the notation $a_{-n}(\eps):=1/a_n(\eps)$.

\begin{df}
Let $E$ be a topological vector space and $(a_n)_{n\in\N}$ a scale. Following \cite{DS88}, the set $\Mod_a(E)$ of \defstyle{$a_n$-moderate} nets in $E^{(0,1]}$ is defined as the set of those $(u_\eps)_\eps\in E^{(0,1]}$ for which
\[
(\forall U \text{ neighb.\ of $0$ in } E) (\exists N\in\N) (u_\eps \in {a_{-N}(\eps)} U, \text{ for small }\eps).
\]
In particular, we call $1/n$-moderateness also \defstyle{asymptotic boundedness} (since it is closely related to the notion of boundedness for subsets of a topological vector space).\\
The set $\Null_a(E)$ of \defstyle{$a_n$-negligible} (or: \defstyle{$a_n$-rapidly decreasing}) nets in $E^{(0,1]}$ is defined as the set of those $(u_\eps)_\eps\in E^{(0,1]}$ for which
\[
(\forall U \text{ neighb.\ of $0$ in } E) (\forall m\in\N) (u_\eps \in a_m(\eps) U, \text{ for small }\eps).
\]
\end{df}
In particular, $1/n$-negligibility coincides with convergence to $0$.
Since the notions of $a_n$-moderateness and $a_n$-negligibility remain unchanged when each $a_n$ is changed on an interval $[\eps_n,1]$ ($\eps_n>0$) and when $(a_n)_{n\in\N}$ is replaced by a subsequence, we can always find an equivalent scale $(b_n)_{n\in\N}$ of maps $(0,1]\to (0,1]$ such that for each $m,n\in\N$, $b_{n+m} \le b_n b_m$.
This will be silently assumed when it is allowed to consider an equivalent scale. It is also sufficient to test moderateness and negligibility for a base of neighbourhoods of $0$ only. In particular, if $E$ is locally convex, with a family of seminorms $(p_i)_{i\in I}$ describing its topology, then 
\begin{align*}
\Mod_a(E)&=\{(u_\eps)_\eps\in E^{(0,1]}: (\forall i\in I) (\exists N\in\N) (p_i(u_\eps)\le a_{-N}(\eps),\text{ for small } \eps)\}\\
\Null_a(E)&=\{(u_\eps)_\eps\in E^{(0,1]}: (\forall i\in I) (\forall m\in\N) (p_i(u_\eps)\le a_{m}(\eps),\text{ for small } \eps)\}.
\end{align*}
If $(a_n)=(\eps^n)$, we simply write $\Mod(E)$ for $\Mod_a(E)$, $\Null(E)$ for $\Null_a(E)$ and $\Gen_E:=\Mod(E)/\Null(E)$ is the Colombeau $\GenC$-module based on $E$ \cite[\S 3]{Garetto2005}. The element $u\in\Gen_E$ with representative $(u_\eps)_\eps\in\Mod(E)$ is sometimes denoted by $[(u_\eps)_\eps]$. Further, $\Gen_E^\infty:=\Mod^\infty(E)/\Null(E)\subset \Gen_E$ \cite[Ex.~3.10]{Garetto2005}, where
\[\Mod^\infty(E) = \{(u_\eps)_\eps\in E^{(0,1]}: (\exists N\in\N) (\forall i\in I) (p_i(u_\eps)\le\eps^{-N}, \text{ for small }\eps)\}.\]
Let $\Omega\subseteq\R^d$ open. If we choose in particular $E=\Cnt[\infty](\Omega)$ with its usual locally convex topology, then $\Gen_E=\Gen(\Omega)$ is the special Colombeau algebra of generalized functions on $\Omega$ \cite[Ex.~3.6]{Garetto2005}. Usually, $\Mod(E)$ is then denoted by $\mathcal E_M(\Omega)$.
The subalgebra $\Gen_c(\Omega)$ of compactly supported generalized functions is the set of those $u\in \Gen(\Omega)$ having a representative $(u_\eps)_\eps$ with $(\supp u_\eps)_\eps\in K^{(0,1]}$, for some $K\csubset\Omega$. 
If we choose $E=\C$, $\Gen_E=\GenC$ is the ring of Colombeau generalized constants. For $f\in\Gen_c(\Omega)$, $\int_\Omega f\in\GenC$, defined on representatives by $(\int_\Omega f_\eps)_\eps\in\Mod(\C)$, is well-defined. $\Gen^\infty(\Omega)$ is defined as the algebra of those $u\in\Gen(\Omega)$ having a representative $(u_\eps)_\eps$ for which 
\[(\forall K\csubset \Omega) (\exists N\in\N) (\forall\alpha\in\N^d) (\sup_{x\in K}\abs{\partial^\alpha u_\eps(x)}\le\eps^{-N}, \text{ for small }\eps).\]
Similarly, $\Gen^\infty_c(\Omega) := \Gen^\infty(\Omega)\cap \Gen_c(\Omega)$. Denoting by $\test(\Omega)$ the space of compactly supported $\Cnt[\infty]$-functions on $\Omega$, $f,g\in\Gen(\Omega)$ are called equal in the sense of generalized distributions if for each $\phi\in\test(\Omega)$, $\int_\Omega f \phi=\int_\Omega g \phi$.\\
If we choose $E=\R^d$, $\Gen_E=\GenR^d$ is the set of $d$-dimensional generalized points. For an open set $\Omega\subseteq\R^d$, the set $\widetilde\Omega_c$ of compactly supported generalized points of $\Omega$ is the set of those $x\in\GenR^d$ having a representative $(x_\eps)_\eps\in K^{(0,1]}$, for some $K\csubset\Omega$. In particular, for $\Omega=\R^+=\{x\in\R: x >0\}$, we denote $\widetilde\Omega_c$ by $\GenR^+_c$.
For $u\in\Gen(\Omega)$ and $x\in\widetilde\Omega_c$, the point value $u(x)\in\GenC$, defined on representatives by $(u_\eps(x_\eps))_\eps\in\Mod(\C)$, is well-defined \cite[1.2.45]{GKOS}. By a similar Taylor-argument, for $u\in\Gen_c(\R^d)$ and $\xi\in\GenR^d$, the Fourier transform $\widehat u(\xi)=\int_{\R^d} u(x) e^{-i\xi x}\,dx\in\GenC$ is well-defined. Moreover, $u\in\Gen(\Omega)$ is completely determined by its point values in compactly supported points \cite[Thm.~1.2.46]{GKOS}. There exist embeddings $\iota$ of the space of distributions $\test'(\Omega)$ into $\Gen(\Omega)$ that preserve the linear operations, derivatives, the product of $\Cnt[\infty](\Omega)$-functions and the pairing (i.e., for $T\in\test'(\Omega)$ and $\phi\in\test(\Omega)$, $\int_\Omega \iota(T) \phi =\pairing{T}{\phi}$ in $\GenC$).
For further properies of $\Gen(\Omega)$, we refer to \cite{GKOS}.\\
We refer to \cite{Treves} for definitions related to topological vector spaces. In particular, a barreled topological vector space is not assumed to be locally convex. We refer to \cite{Estrada} for the definitions of the regularized distributions $x^{-m}$, $\FinPart{H(x)/x^m}$, $x^\alpha_+$, $x^\alpha_-$ $\in\test'(\R)$ (for $m\in\N$, $m\ge 1$ and $\alpha\in\R$, $\alpha\ne -1, -2,\dots$).
We will also denote $S^{d-1}=\{x\in\R^d: \abs{x}=1\}$.

\section{Uniform boundedness and weak equalities}\label{section_uniform_bdedness}
In a way similar to the use in \cite{PSV} of the Baire theorem, we exploit the uniform boundedness principle to show that an equality in the sense of generalized distributions automatically holds for test functions in the larger space $\Gen_c^\infty(\Omega)$. We cast our results in a general framework for two reasons:\\
(1) In this way, it is clear that our results for Colombeau algebras also hold for more general sequence space algebras \cite{DS88,DHPV04}.\\
(2) Using the framework of Colombeau algebras based on a locally convex vector space $E$ \cite{Garetto2005}, our results can be applied to other spaces than $\test(\Omega)$ (e.g., this is already needed for Proposition \ref{prop_weakly_0_outside_origin}).

\begin{lemma}\label{lemma_subsequence_of_asympt_bd}
Let $E$ be a topological vector space. Let $(\eps_n)_{n\in\N}\in (0,1]^\N$ with $\eps_n\to 0$ as $n\to\infty$.
\begin{enumerate}
\item If $(u_\eps)_\eps\in E^{(0,1]}$ is asymptotically bounded, then $\{u_{\eps_n} :n\in\N\}$ is a bounded set in $E$.
\item Let $(a_n)_{n\in\N}$ be a scale. If $(u_\eps)_\eps\in\Mod_a(E)$ and $(m_n)_{n\in\N}\in \N^\N$ tends to infinity, then $\{a_n(\eps_{m_n}) u_{\eps_{m_n}} : n\in\N\}$ is a bounded set in $E$.
\item Let $(a_n)_{n\in\N}$ be a scale. If $(u_\eps)_\eps\in\Null_a(E)$ and $M\in\N$, then $\{a_{-M}(\eps_n) u_{\eps_{n}} : n\in\N\}$ is a bounded set in $E$.
\end{enumerate}
\end{lemma}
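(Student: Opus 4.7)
The plan is to verify each of the three claims via a single template: fix a balanced neighborhood $U$ of $0$ in $E$ (which exists since every TVS has a base of balanced neighborhoods of $0$), produce membership in $U$ of the tail of the given sequence directly from the defining property of moderateness or negligibility, and then absorb the finitely many exceptional indices using that finite subsets of a TVS are bounded.

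Before starting, I would silently replace $(a_n)$ by the equivalent scale taking values in $(0,1]$ with $a_{n+m}\le a_n a_m$, as permitted by the paper's preamble, so that the monotonicity $a_n(\eps)\le a_N(\eps)$ for $n\ge N$ is available for small $\eps$. I also note the elementary observation that whenever $m_n\to\infty$ and $\eps_k\to 0$, one automatically has $\eps_{m_n}\to 0$, so any assertion made ``for small $\eps$'' will hold at $\eps=\eps_n$ or $\eps=\eps_{m_n}$ for all sufficiently large $n$.

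For (1), asymptotic boundedness yields $N$ with $u_\eps\in N\cdot U$ for small $\eps$, so $u_{\eps_n}\in NU$ for $n\ge n_0$. For (3), negligibility applied with the exponent $m=M$ gives $u_\eps\in a_M(\eps)U$ for small $\eps$, i.e.\ $a_{-M}(\eps)u_\eps\in U$, and hence $a_{-M}(\eps_n)u_{\eps_n}\in U$ for $n\ge n_0$. For (2), moderateness supplies $N$ with $a_N(\eps)u_\eps\in U$ for small $\eps$; for every $n\ge N$ the scalar $a_n(\eps)/a_N(\eps)$ lies in $(0,1]$ for small $\eps$ by the scale monotonicity, and balancedness of $U$ then yields $a_n(\eps)u_\eps\in U$. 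Evaluating at $\eps=\eps_{m_n}$ for $n$ large enough so that both $n\ge N$ and $\eps_{m_n}$ is small gives $a_n(\eps_{m_n})u_{\eps_{m_n}}\in U$.

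In each case the remaining finitely many indices form a finite subset of $E$, hence are bounded, hence sit in some $\lambda U$; taking the maximum of $\lambda$ with the scalar from the tail argument and again invoking that $U$ is balanced produces a single dilate of $U$ containing the whole sequence, which is exactly the definition of boundedness. There is no real obstacle; the only subtlety worth flagging is the essential use of a \emph{balanced} neighborhood base in part (2), so that the inequality $a_n(\eps)/a_N(\eps)\le 1$ can be upgraded to the inclusion $a_n(\eps)u_\eps\in U$ given $a_N(\eps)u_\eps\in U$, and this works regardless of whether $E$ is locally convex.
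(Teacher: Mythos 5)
Your proof is correct and follows essentially the same route as the paper's: pick a balanced neighbourhood $U$, get the tail of the sequence into a fixed dilate of $U$ directly from moderateness (respectively negligibility with $m=M$), use $a_n\le a_N$ for $n\ge N$ together with balancedness in part (2), and absorb the finitely many remaining terms as a bounded finite set. Your explicit appeal to the normalized equivalent scale and to balancedness is exactly the device the paper uses silently in the inclusion $a_n(\eps_{m_n})a_{-N}(\eps_{m_n})U\subseteq U$, so there is nothing further to add.
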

\begin{proof}
(1) Let $U$ be a balanced neighbourhood of $0$ in $E$. As $(u_\eps)_\eps\in E^{(0,1]}$ is asymptotically bounded and $\eps_n\to 0$, there exists $M\in\N$ and $\lambda\in\R^+$ such that $\{u_{\eps_n}: n\ge M\}\subseteq \lambda U$. Then there also exists $\lambda'\in\R^+$ such that the union of $\{u_{\eps_n}: n\ge M\}$ with the finite set $\{u_{\eps_n}: n < M\}$ is contained in $\lambda' U$.\\
(2) Let $U$ be a balanced neighbourhood of $0$ in $E$. Then there exist $M\in\N$ and $N\in\N$ such that for each $n\ge M$, $u_{\eps_{m_n}}\in a_{-N}(\eps_{m_n}) U$. Hence $a_n(\eps_{m_n})u_{\eps_{m_n}}\in a_n(\eps_{m_n}) a_{-N}(\eps_{m_n}) U\subseteq U$, as soon as $n\ge\max(M,N)$. As in part~(1), this implies that there exists $\lambda\in\R^+$ such that $\{a_n(\eps_{m_n}) u_{\eps_{m_n}} : n\in\N\}\subseteq \lambda U$.\\
(3) By definition of $\Null_a(E)$, the net $(a_{-M}(\eps) u_\eps)_\eps$ is asymptotically bounded, $\forall M$. The result follows by part~1.
\end{proof}

\begin{thm}
\label{thm_asympt_uniform_bd}
Let $E$ be a barreled topological vector space. Let $(a_n)_{n\in\N}$ be a scale. Let $(T_\eps)_\eps\in\Mod_a(E')$ (for the topology of pointwise convergence), i.e.,
\[
(\forall u\in E) (\exists N\in\N) (\abs{T_\eps(u)}\le a_{-N}(\eps), \text{ for small }\eps).
\]
\begin{enumerate}
\item If $(u_\eps)_\eps\in\Mod_a(E)$, then also $(T_\eps(u_\eps))_\eps\in\Mod_a(\C)$.
\item Let $(a_n)=(1/n)$ or let the topology of $E$ have a countable base of neighbourhoods of $0$. If $(u_\eps)_\eps\in \Null_a(E)$, then also $(T_\eps(u_\eps))_\eps\in\Null_a(\C)$.
\item Let $E$ be locally convex with its topology generated by a sequence of seminorms $(p_k)_{k\in\N}$. Then
\[
(\exists M\in\N) (\exists\eps_0\in (0,1]) (\forall\eps\le\eps_0) (\forall u \in E)
\big(\abs{T_\eps(u)}\le a_{-M}(\eps) \max_{k\le M} p_k(u)\big).\]
\end{enumerate}
\end{thm}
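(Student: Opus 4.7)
The plan is to prove all three parts by contradiction, using in each case two ingredients: Lemma \ref{lemma_subsequence_of_asympt_bd}, to extract bounded sequences from moderate or negligible nets along a subsequence $\eps_n\to 0$, and Banach--Steinhaus on the barreled space $E$ (which converts pointwise boundedness of a family of continuous linear functionals into equicontinuity). Throughout, scale exponents are matched using the subadditive inequality $a_{n+m}\le a_n a_m$.

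For (1), assuming $(T_\eps(u_\eps))_\eps\notin\Mod_a(\C)$, I would extract $\eps_n\le 1/n$ with $|T_{\eps_n}(u_{\eps_n})|>a_{-3n}(\eps_n)$; the factor $3$ is chosen to leave enough slack for two subsequent rescalings. Lemma \ref{lemma_subsequence_of_asympt_bd}(2), applied respectively to $(u_\eps)$ in $E$ and to $(T_\eps)$ in $E'$ with the pointwise topology, makes $B:=\{a_n(\eps_n)u_{\eps_n}\}$ bounded in $E$ and $\{a_n(\eps_n)T_{\eps_n}\}$ pointwise bounded on $E$, hence equicontinuous by barreledness. Uniform boundedness of this family on $B$ gives $a_n(\eps_n)^2|T_{\eps_n}(u_{\eps_n})|\le C$; but $a_{3n}\le a_n^3$ together with the lower bound force $a_n(\eps_n)^2|T_{\eps_n}(u_{\eps_n})|\ge a_{-n}(\eps_n)\to\infty$, a contradiction.

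Part (2) splits into cases. When $(a_n)=(1/n)$, moderateness of $(T_\eps)$ is ordinary pointwise boundedness for $\eps$ small, so Banach--Steinhaus directly yields equicontinuity of $\{T_\eps:\eps\le\eps_0\}$, and then $u_\eps\to 0$ forces $T_\eps(u_\eps)\to 0$. In the metrizable case I would deduce (2) from (3): negligibility of $(u_\eps)$ gives $\max_{k\le M}p_k(u_\eps)\le a_{m+M}(\eps)$ for any $m$, and the scale inequality $a_{-M}a_{m+M}\le a_m$ combined with the uniform bound of (3) yields $|T_\eps(u_\eps)|\le a_m(\eps)$ for every $m$, i.e.\ $(T_\eps(u_\eps))_\eps\in\Null_a(\C)$.

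The main obstacle is (3). Negate the conclusion and for each $n$ pick $\eps_n\le 1/n$ and $u_n\in E$ with $|T_{\eps_n}(u_n)|>a_{-n}(\eps_n)\max_{k\le n}p_k(u_n)$ (the degenerate case $\max_{k\le n}p_k(u_n)=0$ is handled by a separate rescaling of $u_n$). Define $v_n:=(a_{\lfloor n/3\rfloor}(\eps_n)/\max_{k\le n}p_k(u_n))\,u_n$, so that $p_j(v_n)\le a_{\lfloor n/3\rfloor}(\eps_n)\to 0$ for each fixed $j$ and hence $v_n\to 0$ in $E$. The scaled family $S_n:=a_{\lfloor n/3\rfloor}(\eps_n)T_{\eps_n}$ is pointwise bounded, since $\lfloor n/3\rfloor\to\infty$ and $|T_\eps(u)|\le a_{-N(u)}(\eps)$ for $\eps$ small, so barreledness gives equicontinuity and $S_n(v_n)\to 0$. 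However, iterated use of the subadditive inequality yields $|S_n(v_n)|>a_{\lfloor n/3\rfloor}(\eps_n)^2\,a_{-n}(\eps_n)\ge a_{-\lfloor n/3\rfloor}(\eps_n)\to\infty$, the desired contradiction. The delicate point is balancing three scaling exponents: both the functional rescaling and the seminorm normalization must tend to infinity (to gain equicontinuity and convergence to zero respectively), while their sum must stay strictly below $n$, leaving a positive surplus from $a_{-n}$ to drive the blow-up.
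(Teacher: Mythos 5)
Your parts (1) and (3) are essentially the paper's own argument: contradiction along a sequence $\eps_n\to 0$, Lemma \ref{lemma_subsequence_of_asympt_bd} to get a pointwise bounded (hence, by barreledness, equicontinuous) rescaled family $\{a_n(\eps_n)T_{\eps_n}\}$, and scale bookkeeping via $a_{n+m}\le a_n a_m$; your variations (extracting with exponent $3n$ instead of passing to $\eps_{2n}$, normalizing $u_n$ by $\max_{k\le n}p_k(u_n)$ — whose degenerate case you rightly flag and which is indeed repairable) are harmless. The genuine gap is in part (2), second alternative. You deduce it from part (3), but (3) presupposes that $E$ is locally convex with its topology given by countably many seminorms, whereas the second case of (2) only assumes a countable base of neighbourhoods of $0$ in a barreled space which, as the paper stresses in the preliminaries, is \emph{not} assumed to be locally convex. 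So your reduction proves the statement only for metrizable locally convex $E$ (enough for the applications to $\test(K)$, but strictly weaker than what is claimed). The paper's proof of this case avoids seminorms altogether: from the negligible net $(u_\eps)_\eps$ and the bad sequence $(\eps_n)_n$ it builds an auxiliary net $v_{\eps_n}=a_{-M}(\eps_n)a_{-n}(\eps_n)u_{\eps_n}$ (and $v_\eps=0$ elsewhere), checks $(v_\eps)_\eps\in\Mod_a(E)$ using only the countable base $(U_n)$, and then derives the contradiction from part (1) applied to $(v_\eps)_\eps$.

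A second, smaller flaw is in your first case of (2): $1/n$-moderateness of $(T_\eps)_\eps$ gives, for each fixed $u$, a bound $\abs{T_\eps(u)}\le N_u$ only for $\eps\le\eps_u$ with a threshold depending on $u$, so the family $\{T_\eps:\eps\le\eps_0\}$ need not be pointwise bounded for any single $\eps_0$, and Banach--Steinhaus cannot be applied to it directly. The repair is exactly the scheme you use elsewhere (and which the paper uses here): suppose $\abs{T_{\eps_n}(u_{\eps_n})}>\lambda$ along some $\eps_n\to 0$; then $\{T_{\eps_n}:n\in\N\}$ is pointwise bounded by Lemma \ref{lemma_subsequence_of_asympt_bd}(1), hence equicontinuous, and $u_{\eps_n}\to 0$ yields the contradiction.
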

\begin{proof}
(1) Supposing that $(T_\eps(u_\eps))_\eps\notin\Mod_a(\C)$, we find a decreasing sequence $(\eps_n)_{n\in\N}$ tending to $0$ such that $\abs{T_{\eps_n}(u_{\eps_n})}> a_{-n}(\eps_n)$, $\forall n$.
As $(T_\eps)_\eps\in\Mod_a(E')$, $\{a_n(\eps_{2n}) T_{\eps_{2n}}: n\in\N\}$ is bounded in $E'$ (with the topology of pointwise convergence) by lemma \ref{lemma_subsequence_of_asympt_bd}(2). Hence the uniform boundedness principle \cite[Thm.~33.1]{Treves} implies that $\{a_n(\eps_{2n}) T_{\eps_{2n}}:n\in\N\}$ is equicontinuous, i.e.,
\[
(\forall r\in\R^+) (\exists U \text{ neighb.\ of $0$ in }E) (\forall n\in\N) (\forall u\in U)(\abs{a_n(\eps_{2n}) T_{\eps_{2n}}(u)}\le r).
\]
Choose $r=1$ and choose a corresponding neighbourhood $U$.
Since $(u_\eps)_\eps\in\Mod_a(E)$, there exists $N\in\N$ such that $a_N(\eps_{2n}) u_{\eps_{2n}}\in U$, for sufficiently large $n$. But then $\abs{a_n(\eps_{2n}) T_{\eps_{2n}}(a_N(\eps_{2n}) u_{\eps_{2n}})}\le 1$, for sufficiently large $n$. Hence
\[
\abs{T_{\eps_{2n}}(u_{\eps_{2n}})}\le {a_{-n}(\eps_{2n}) a_{-N}(\eps_{2n})}\le {(a_{-n}(\eps_{2n}))^2}\le {a_{-2n}(\eps_{2n})},
\]
for sufficiently large $n$.
This contradicts $\abs{T_{\eps_n}(u_{\eps_n})}> a_{-n}(\eps_n)$, $\forall n$.\\
(2) First case: $(a_n)=(1/n)$. Supposing that $T_\eps(u_\eps)\not\to 0$, we find $\lambda\in\R^+$ and a decreasing sequence $(\eps_n)_{n\in\N}$ tending to $0$ such that $\abs{T_{\eps_n}(u_{\eps_n})}> \lambda$, $\forall n$. By lemma \ref{lemma_subsequence_of_asympt_bd}(1), $\{T_{\eps_n}:n\in\N\}$ is bounded in $E'$. As in part~(1), it follows that
\[
(\exists U \text{ neighb.\ of $0$ in }E) (\forall n\in\N) (\forall u\in U)(\abs{T_{\eps_n}(u)}\le \lambda)
\]
and since $u_\eps\to 0$, $u_{\eps_n}\in U$ for sufficiently large $n$. A contradiction follows.\\
Second case: let $(U_n)_{n\in\N}$ be a countable base of neighbourhoods of $0$ in $E$ with $U_{n+1}\subseteq U_n$, $\forall n$. As $(u_\eps)_\eps\in\Null_a(E)$, we find for each $m\in\N$ some $\eta_m\in (0,1]$ such that $u_\eps\in a_m(\eps)U_m$, $\forall\eps\le\eta_m$. Supposing that $(T_\eps(u_\eps))_\eps\notin\Null_a(\C)$, we find $M\in\N$ and a decreasing sequence $(\eps_n)_{n\in\N}$ tending to $0$ such that $\abs{T_{\eps_n}(u_{\eps_n})}> a_M(\eps_n)$, $\forall n$. We may suppose that $\eps_n\le \eta_n$, $\forall n$. Now let
\[
v_\eps =
\begin{cases}
a_{-M}(\eps_n) a_{-n}(\eps_n) u_{\eps_n},&\eps\in\{\eps_n:n\in\N\}\\
0,&\text{otherwise}.
\end{cases}
\]
Let $U$ be a neighbourhood of $0$ in $E$. Then there exists $N\in\N$ such that $U_N\subseteq U$. Hence, $\forall n\ge N$, as $\eps_n\le\eta_n$,
\[v_{\eps_n}= a_{-M}(\eps_n) a_{-n}(\eps_n) u_{\eps_n} \in a_{-M}(\eps_n) U_n \subseteq a_{-M}(\eps_n) U.\]
Hence $(v_\eps)_\eps\in \Mod_a(E)$. By part~(1), $(T_\eps(v_\eps))_\eps\in\Mod_a(\C)$. Further, $\forall n$,
\[
\abs{T_{\eps_n}(v_{\eps_n})}= a_{-M}(\eps_n) a_{-n}(\eps_n) \abs{T_{\eps_n}(u_{\eps_n})}> a_{-n}(\eps_n).
\]
This contradicts $(T_\eps(v_\eps))_\eps\in\Mod_a(\C)$.\\
(3) By contraposition, we find a decreasing sequence $(\eps_n)_{n\in\N}$ tending to $0$ and $u_n\in E$ such that $\abs{T_{\eps_n}(u_{n})}> a_{-n}(\eps_n) \max_{k\le n} p_k(u_n)$, $\forall n$.
By lemma \ref{lemma_subsequence_of_asympt_bd}(2), $\{a_n(\eps_{2n})T_{\eps_{2n}}:n\in\N\}$ is bounded in $E'$. Expressing the equicontinuity, we obtain
\[
(\exists C\in \R^+) (\exists N\in\N) (\forall u\in E) (\forall n\in\N) \big(a_n(\eps_{2n})\abs{T_{\eps_{2n}}(u)}\le C \max_{k\le N} p_k(u)\big).
\]
In particular, for sufficiently large $n\in\N$,
\begin{align*}
a_{-n}(\eps_{2n}) \max_{k\le N} p_k(u_{2n}) & \le a_n(\eps_{2n}) a_{-2n}(\eps_{2n}) \max_{k\le 2n} p_k(u_{2n})\\
&< a_n(\eps_{2n})\abs{T_{\eps_{2n}}(u_{2n})}\le C \max_{k\le N} p_k(u_{2n}),
\end{align*}
hence $a_{-n}(\eps_{2n}) < C$, for sufficiently large $n$, contradicting the fact that $a_n$ is a (w.l.o.g.\ decreasing in $n$)
scale.
\end{proof}

\begin{thm}\label{thm_(f_n)-asympt}
Let $E$ be a barreled topological vector space. Let $(a_n)_{n\in\N}$ be a scale. Let $(\forall m\in\N) (\exists n\in\N) \big(\lim\limits_{\eps\to 0} \frac{a_n(\eps)}{a_m(\eps)}=0\big)$ or let every bounded set in $E$ be precompact. Let $(T_\eps)_\eps\in \Null_a(E')$ (for the topology of pointwise convergence). If $(u_\eps)_\eps\in E^{(0,1]}$ is asymptotically bounded, then also $(T_\eps(u_\eps))_\eps\in\Null_a(\C)$.
\end{thm}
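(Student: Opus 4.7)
The plan is to argue by contradiction in the spirit of Theorem~\ref{thm_asympt_uniform_bd}, combining the uniform boundedness principle with one of the two hypotheses to finish.

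Suppose $(T_\eps(u_\eps))_\eps \notin \Null_a(\C)$: there exist $M \in \N$ and a decreasing sequence $\eps_n \to 0$ with $\abs{T_{\eps_n}(u_{\eps_n})} > a_M(\eps_n)$ for every $n$. By Lemma~\ref{lemma_subsequence_of_asympt_bd}(1) the set $B := \{u_{\eps_n} : n \in \N\}$ is bounded in $E$. For every fixed $N \in \N$, the family $\{a_{-N}(\eps_n) T_{\eps_n} : n \in \N\}$ is pointwise bounded in $E'$: for each $u \in E$, $(T_\eps)_\eps \in \Null_a(E')$ yields $\abs{T_{\eps_n}(u)} \le a_N(\eps_n)$ for $n$ large, hence $\abs{a_{-N}(\eps_n) T_{\eps_n}(u)} \le 1$. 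Since $E$ is barreled, the uniform boundedness principle then gives equicontinuity of this family.

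Under the first hypothesis I argue directly. Taking $r = 1$ in equicontinuity of $\{a_{-N}(\eps_n) T_{\eps_n}\}$ produces a neighbourhood $V$ of $0$ with $\abs{a_{-N}(\eps_n) T_{\eps_n}(u)} \le 1$ for all $u \in V$ and $n \in \N$, and boundedness of $B$ provides $\lambda > 0$ with $u_{\eps_n} \in \lambda V$ for all $n$. Hence $\abs{T_{\eps_n}(u_{\eps_n})} \le \lambda\, a_N(\eps_n)$; combined with the standing lower bound this forces $a_N(\eps_n)/a_M(\eps_n) > 1/\lambda$ for large $n$. Applying the hypothesis with $m = M$ to select $N$ so that $a_N/a_M \to 0$ yields the contradiction.

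Under the second hypothesis I fix $N = M$ and first check that $a_{-M}(\eps_n) T_{\eps_n} \to 0$ pointwise on $E$. Given $u \in E$ and $\delta > 0$, the scale axioms produce $m$ with $a_m(\eps) \le \delta\, a_M(\eps)$ for small $\eps$: the first axiom yields $K$ with $a_K \le \delta$ for small $\eps$, and the submultiplicativity axiom yields $m$ with $a_m \le a_M\, a_K$ for small $\eps$. Combined with $\abs{T_{\eps_n}(u)} \le a_m(\eps_n)$ for $n$ large, this gives the pointwise limit. Since $B$ is precompact and $\{a_{-M}(\eps_n) T_{\eps_n}\}$ is equicontinuous and pointwise null, the classical fact that on equicontinuous sets the topologies of pointwise and of uniform-on-precompact-sets convergence coincide (\cite[Ch.~32]{Treves}) upgrades the limit to uniform convergence on $B$. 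In particular $\abs{a_{-M}(\eps_n) T_{\eps_n}(u_{\eps_n})} \to 0$, contradicting $\abs{T_{\eps_n}(u_{\eps_n})}/a_M(\eps_n) > 1$.

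The main delicacy is the bifurcation between the two hypotheses: unlike Theorem~\ref{thm_asympt_uniform_bd}(2), one cannot reduce here to part~(1) by cooking up a single auxiliary $(v_\eps)_\eps \in \Mod_a(E)$ (there is no scaling of $u_{\eps_n}$ keeping moderateness while blowing up the value), so the two cases must be handled by different tools — a scale-ratio argument in the first case and the classical equicontinuity/precompactness principle in the second.
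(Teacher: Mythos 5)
Your proof is correct and follows essentially the same route as the paper's: the same contradiction setup, Lemma~\ref{lemma_subsequence_of_asympt_bd} plus the uniform boundedness principle to obtain equicontinuity, and the same two-case split, with your scale-ratio case matching the paper's second case almost verbatim. In the precompact case you invoke the classical fact that pointwise and precompact-uniform convergence coincide on equicontinuous sets, which is exactly the finite-net computation the paper writes out explicitly (your scale-axiom verification that $a_{-M}(\eps_n)T_{\eps_n}\to 0$ pointwise playing the role of the paper's bound $\abs{T_\eps(v)}\le a_M(\eps)/2$ on the finite net), so the two arguments are the same in substance.
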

\begin{proof}
Supposing that $(T_\eps(u_\eps))_\eps\notin\Null_a(\C)$, we find $M\in\N$ and a decreasing sequence $(\eps_n)_{n\in\N}$ tending to $0$ such that $\abs{T_{\eps_n}(u_{\eps_n})}> a_M(\eps_n)$, $\forall n$. If every bounded set in $E$ is precompact, choose $M'=M$; otherwise, choose $M'$ such that $\lim_{\eps\to 0} \frac{a_{M'}(\eps)}{a_M(\eps)}=0$. By lemma~\ref{lemma_subsequence_of_asympt_bd}(3), $\{a_{-M'}(\eps_n) T_{\eps_n}: n\in\N\}$ is bounded in $E'$ (for the topology of pointwise convergence). As in proposition \ref{thm_asympt_uniform_bd}, we find a neighbourhood $U$ of $0$ in $E$ such that
\[
(\forall n\in\N) (\forall u\in U) (\abs{T_{\eps_n}(u)}\le a_{M'}(\eps_n)/2).
\]
By lemma \ref{lemma_subsequence_of_asympt_bd}(1), $\{u_{\eps_n}: n\in\N\}$ is bounded.\\
First case: every bounded set in $E$ is precompact. Then we find a finite subset $F\subseteq E$ such that \[(\forall n\in\N) (\exists v\in F) (u_{\eps_n} - v \in U),\]
hence also
\[(\forall n\in\N) (\exists v\in F) (\abs{T_{\eps_n}(u_{\eps_n}-v)}\le a_M(\eps_n)/2).\]
Further, since $(T_\eps)_\eps\in\Null_a(E')$ (for the topology of pointwise convergence),
\[(\exists \eps_0\in (0,1]) (\forall \eps\le \eps_0) (\forall v\in F) (\abs{T_\eps(v)}\le a_M(\eps)/2).\]
Combining these identities,
\[(\exists N\in\N) (\forall n\ge N) (\exists v\in F)(\abs{T_{\eps_n}(u_{\eps_n})} \le \abs{T_{\eps_n}(u_{\eps_n}-v)} + \abs{T_{\eps_n}(v)}\le a_M(\eps_n)).\]
This contradicts $\abs{T_{\eps_n}(u_{\eps_n})}> a_M(\eps_n)$, $\forall n$.\\
Second case: as $\{u_{\eps_n}: n\in\N\}$ is bounded, there exists $\lambda\in\R^+$ such that $u_{\eps_n}\in \lambda U$, $\forall n$, hence also
\[\abs{T_{\eps_n}(u_{\eps_n})}\le \lambda a_{M'}(\eps_n)/2 \le a_M(\eps_n),\]
for sufficiently large $n$, again contradicting $\abs{T_{\eps_n}(u_{\eps_n})}> a_M(\eps_n)$, $\forall n$.
\end{proof}

\begin{prop}\label{prop_inductive_limit}
Let $E$ be a strict inductive limit 
of Fr{\'e}chet spaces $(E_n)_{n\in\N}$.
\begin{enumerate}
\item For each $n\in\N$, the identity map on representatives $\Mod(E_n)\to\Mod(E)$ induces a canonical embedding $\Gen_{E_n}\to\Gen_{E}$.
\item Identifying $\Gen_{E_n}$ with its image under the embedding in part~(1), $\Gen_E=\bigcup_{n\in\N}\Gen_{E_n}$.
\item For each $n\in\N$, $\Gen_{E_n}\cap \Gen_E^\infty= \Gen_{E_n}^\infty$
(hence also $\Gen_E^\infty=\bigcup_{n\in\N}\Gen_{E_n}^\infty$).
\end{enumerate}
\end{prop}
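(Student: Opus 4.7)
Part (1) reduces to the fact that each $E_n$ carries its Fréchet topology as a topological subspace of $E$ (a characteristic of strict inductive limits), so every neighbourhood of $0$ in $E_n$ has the form $E_n \cap U$ for a neighbourhood $U$ of $0$ in $E$. This gives the inclusions $\Mod(E_n) \subseteq \Mod(E)$ and $\Null(E_n) \subseteq \Null(E)$ at once, hence a well-defined map $\Gen_{E_n} \to \Gen_E$. For injectivity, if $(u_\eps)_\eps \in \Mod(E_n) \cap \Null(E)$ and $V = E_n \cap U$, the conditions $u_\eps \in E_n$ and $u_\eps \in \eps^m U$ together give $u_\eps \in \eps^m V$, so $(u_\eps)_\eps \in \Null(E_n)$.

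The heart of the proof, and the main obstacle, is the key claim behind part~(2): \emph{every $(u_\eps)_\eps \in \Mod(E)$ eventually takes values in some $E_{n_0}$}. I would prove this by contradiction. Assume otherwise, extract a decreasing $\eps_n \to 0$ with $u_{\eps_n} \notin E_n$, and consider the diagonal sequence $\eps_n^n u_{\eps_n}$. For any balanced neighbourhood $U$ of $0$ in $E$, moderateness supplies an $N$ with $\eps^N u_\eps \in U$ for small $\eps$, so writing $\eps_n^n u_{\eps_n} = \eps_n^{n-N} \cdot \eps_n^N u_{\eps_n}$ shows this term lies in $U$ once $\eps_n \le 1$ and $n \ge N$. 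Thus $\{\eps_n^n u_{\eps_n}:n\in\N\}$ is bounded in $E$, and since bounded sets in a strict LF-space are contained in some $E_m$, we get $u_{\eps_n} = \eps_n^{-n}(\eps_n^n u_{\eps_n}) \in E_m$ for every $n$, contradicting $u_{\eps_n} \notin E_n$ as soon as $n > m$. Part~(2) then follows by truncation: set $\tilde u_\eps := u_\eps$ for $\eps \le \eps_0$ and $\tilde u_\eps := 0$ otherwise; this new net lies in $E_{n_0}$ throughout, differs from $(u_\eps)_\eps$ only outside a neighbourhood of $0$ (hence in $\Null(E)$), and belongs to $\Mod(E_{n_0})$ by the subspace-topology argument of part~(1).

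For part~(3) I would prove the two inclusions separately. For $\Gen_{E_n}^\infty \subseteq \Gen_E^\infty$, I use continuity of $E_n \hookrightarrow E$: a representative $(v_\eps)_\eps \in \Mod^\infty(E_n)$ admits a single exponent $N$ bounding every seminorm on $E_n$, and any continuous seminorm $p$ on $E$ restricts to a seminorm on the Fréchet space $E_n$ bounded by a constant times one of its defining seminorms, yielding $p(v_\eps) \le C_p \eps^{-N} \le \eps^{-N-1}$ for $\eps$ small (depending on $p$). For the reverse inclusion $\Gen_{E_n} \cap \Gen_E^\infty \subseteq \Gen_{E_n}^\infty$, I would take a representative $(v_\eps)_\eps \in \Mod(E_n)$ supplied by part~(2), and use $u \in \Gen_E^\infty$ to decompose $v_\eps = w_\eps + \nu_\eps$ with $(w_\eps)_\eps \in \Mod^\infty(E)$ and $(\nu_\eps)_\eps \in \Null(E)$. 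Since $\Null(E) \subseteq \Mod^\infty(E)$ trivially (take $N=0$) and $\Mod^\infty(E)$ is a vector space, $(v_\eps)_\eps \in \Mod^\infty(E)$; as $v_\eps \in E_n$, restricting the seminorms on $E$ to $E_n$ gives $(v_\eps)_\eps \in \Mod^\infty(E_n)$, so $u \in \Gen_{E_n}^\infty$. The parenthetical consequence $\Gen_E^\infty = \bigcup_n \Gen_{E_n}^\infty$ is then immediate from part~(2) combined with this equality.
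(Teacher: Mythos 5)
Your proof is correct, and for parts (1) and (3) it runs along essentially the same lines as the paper's: everything reduces to the fact that, by strictness, $E_n$ carries the topology induced by $E$, which gives $E_n^{(0,1]}\cap\Mod(E)=\Mod(E_n)$, $E_n^{(0,1]}\cap\Null(E)=\Null(E_n)$ and $E_n^{(0,1]}\cap\Mod^\infty(E)=\Mod^\infty(E_n)$; your detour in (3) via $v_\eps=w_\eps+\nu_\eps$ and $\Null(E)\subseteq\Mod^\infty(E)$ is just a slightly longer route to the third identity, and the uniform exponent does transfer to the $E_n$-seminorms because, again by strictness, every continuous seminorm of $E_n$ is dominated by the restriction of a continuous seminorm of $E$. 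The genuine difference is in the key claim of part (2). The paper argues directly: after extracting $\eps_n\to 0$ with $u_{\eps_n}\notin E_n$, it constructs, as in the proof of Treves' Prop.~14.6, convex neighbourhoods $U_n$ of $0$ in $E_n$ with $U_{n+1}\cap E_n=U_n$ and $\eps_j^j u_{\eps_j}\notin U_n$ for $j<n$, so that $U=\bigcup_n U_n$ is a convex neighbourhood of $0$ in $E$ contradicting moderateness. You instead observe that the diagonal net $\eps_n^n u_{\eps_n}$ is bounded in $E$ (this is in effect Lemma \ref{lemma_subsequence_of_asympt_bd}(2) of the paper, which you reprove) and invoke the Dieudonn\'e--Schwartz theorem that a bounded subset of a strict inductive limit of Fr\'echet spaces is contained in some $E_m$ — a legitimate appeal, since each $E_n$, being complete in the induced topology, is closed in $E_{n+1}$. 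Your route buys a shorter write-up by outsourcing the hard step to a standard theorem whose proof is essentially the neighbourhood construction the paper carries out, while the paper stays self-contained modulo the construction it cites; the truncation of the representative at $\eps_0$, left implicit in the paper, you spell out correctly.
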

\begin{proof}
(1), (3) For each $n\in\N$, $E_n^{(0,1]}\cap \Mod(E)=\Mod(E_n)$, $E_n^{(0,1]}\cap\Null(E) = \Null(E_n)$ and $E_n^{(0,1]}\cap \Mod^\infty(E)=\Mod^\infty(E_n)$ follow easily since a convex $U\subseteq E$ is a neighbourhood of $0$ in $E$ iff $U\cap E_n$ is a neighbourhood of $0$ in $E_n$, for each $n$ and because the topology on $E_n$ is the relative topology induced by $E$.\\
(2) Let $(u_\eps)_\eps\in\Mod(E)$. We show that there exists $m\in\N$ and $\eps_0\in(0,1]$ such that for each $\eps\le\eps_0$, $u_\eps\in E_m$. For, supposing the contrary, we find a decreasing sequence $(\eps_n)_{n\in\N}$ tending to $0$ such that $u_{\eps_n}\notin E_n$, for each $n$. As in the proof of \cite[Prop.~14.6]{Treves}, one can inductively construct convex neighbourhoods $U_n$ of $0$ in $E_n$ such that $U_{n+1}\cap E_n= U_n$ and $\eps_j^j u_{\eps_j}\notin U_n$, for each $j<n$. Then $U=\bigcup_{n\in\N} U_n$ is a convex neighbourhood of $0$ in $E$ and for each $n\in\N$, $u_{\eps_n}\notin\eps_n^{-n} U$, contradicting $(u_\eps)_\eps\in \Mod(E)$.
So if $u\in\Gen_E$, we find $m\in\N$ and a representative $(u_\eps)_\eps$ of $u$ such that $(u_\eps)_\eps\in E_m^{(0,1]}\cap \Mod(E)=\Mod(E_m)$.
\end{proof}

\begin{thm}\label{thm_strongly_associated}
Let $\Omega\subseteq\R^d$ open. Let $f\in\Gen(\Omega)$ and $T\in\test'(\Omega)$. Let $\int_\Omega f \phi = T(\phi)$, $\forall\phi\in\test(\Omega)$. Then $\int_\Omega f\phi = T(\phi)$, $\forall\phi\in\Gen_c^\infty(\Omega)$.
\end{thm}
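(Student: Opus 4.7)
The plan is to reformulate the hypothesis as saying that the family of distributions $S_\eps:=T-f_\eps\in\test'(\Omega)$ is pointwise negligible on $\test(\Omega)$, and then to upgrade this to negligibility of $S_\eps(\phi_\eps)$ for $(\phi_\eps)_\eps$ a representative of an arbitrary $\phi\in\Gen_c^\infty(\Omega)$, by invoking Theorem \ref{thm_(f_n)-asympt} on a single Fr\'echet slice $\test_K(\Omega)$.

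First I would fix a compact $K\csubset\Omega$ such that $\phi$ admits a representative $(\phi_\eps)_\eps$ with $\supp\phi_\eps\subseteq K$ and, after a standard cutoff if needed, also satisfying the $\Gen^\infty$ bound on $K$: there exists $N\in\N$ with $\sup_{x\in K}|\partial^\alpha\phi_\eps(x)|\le\eps^{-N}$ for every multi-index $\alpha$ and sufficiently small $\eps$. Equivalently, $(\eps^N\phi_\eps)_\eps$ is a bounded subset of the Fr\'echet space $\test_K(\Omega)$ (equipped with the seminorms $p_\alpha(\psi)=\sup_K|\partial^\alpha\psi|$), and in particular asymptotically bounded in the sense of the scale $(\eps^n)$. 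At the same time, the hypothesis states precisely that $(S_\eps)_\eps\in\Null(\test(\Omega)')$ for the topology of pointwise convergence; composing with the continuous inclusion $\test_K(\Omega)\hookrightarrow\test(\Omega)$, the restriction satisfies $(S_\eps)_\eps\in\Null(\test_K(\Omega)')$ pointwise as well.

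The core step is then a direct application of Theorem \ref{thm_(f_n)-asympt} to $E=\test_K(\Omega)$ (barreled, as a Fr\'echet space) with the scale $a_n=\eps^n$, which trivially verifies the first alternative $\lim_{\eps\to 0}a_{n+1}(\eps)/a_n(\eps)=0$ so that no Montel-type hypothesis on $E$ is required. Taking $u_\eps=\eps^N\phi_\eps$ yields $(\eps^N S_\eps(\phi_\eps))_\eps=(S_\eps(\eps^N\phi_\eps))_\eps\in\Null(\C)$, and multiplication by $\eps^{-N}$ preserves negligibility (trading an order $m$ estimate for an order $m+N$ estimate). Therefore $(\int_\Omega f_\eps\phi_\eps-T(\phi_\eps))_\eps$ is negligible, i.e.\ $\int_\Omega f\phi=T(\phi)$ in $\GenC$. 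I do not foresee a serious obstacle; the only slightly delicate preparatory point is arranging simultaneously a common compact support and a single exponent $N$ uniform in $\alpha$, which is exactly what the definitions of $\Gen_c(\Omega)$ and $\Gen^\infty(\Omega)$ guarantee modulo a cutoff argument.
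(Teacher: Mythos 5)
Your proof is correct and takes essentially the same route as the paper: both reduce the claim to Theorem \ref{thm_(f_n)-asympt}, applied to the pointwise-negligible net of functionals $\phi\mapsto\int_\Omega f_\eps\phi - T(\phi)$ together with the asymptotically bounded net $(\eps^N\phi_\eps)_\eps$, and then divide out $\eps^N$. The only cosmetic difference is that the paper invokes barreledness of the LF-space $\test(\Omega)$ and gets the fixed-support, single-exponent representative from Proposition \ref{prop_inductive_limit}(3), whereas you work on the Fr\'echet slice $\test(K)$ and obtain that representative by a direct cutoff; both are valid.
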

\begin{proof}
By the hypotheses, the net $(\phi\mapsto \int_\Omega f_\eps\phi - T(\phi))_\eps\in\Null(\test'(\Omega))$. Let $\phi\in\Gen^\infty_c(\Omega)$. By 
proposition \ref{prop_inductive_limit}(3), for a compact exhaustion $(K_n)_{n\in\N}$ of $\Omega$, $\Gen_c^\infty(\Omega)=\bigcup_{n\in\N}\Gen^\infty_{\test(K_n)}=\Gen_{\test(\Omega)}^\infty$. Then there exists $N\in\N$ and a representative $(\phi_\eps)_\eps$ of $\phi$ such that the net $(\eps^N\phi_\eps)_\eps$ is asymptotically bounded. As $\test(\Omega)$ is barreled \cite[Prop.~34.4]{Treves}, the net $(\eps^N (\int_\Omega f_\eps\phi_\eps - T(\phi_\eps)))_\eps\in\Null(\test(\Omega))$ by theorem \ref{thm_(f_n)-asympt}.
\end{proof}

\section{Characterization of equality in the sense of generalized distributions by means of the Fourier transform}\label{section_Fourier}

\begin{df}
A generalized point $[(x_\eps)_\eps]\in\GenR^d$ is said to be \defstyle{of slow scale} if for each $a\in\R^+$, 
$\abs{x_\eps}\le\eps^{-a}$, for small $\eps$. We denote the set of slow scale points by $\GenR^d_{ss}$.
\end{df}
\begin{prop}\label{prop_regular_Fourier_inverse}
Let $f\in\Gen_c^\infty(\R^d)$. Then $f=0$ iff $\widehat f(\xi) = 0$, $\forall\xi\in\GenR^d_{ss}$.
\end{prop}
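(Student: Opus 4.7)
The forward direction is immediate. Pick a negligible representative $(f_\eps)_\eps$ of $f=0$ with supports in a fixed $K\csubset\R^d$; then for any $\xi\in\GenR^d$, $|\widehat{f_\eps}(\xi_\eps)|\le\meas(K)\sup_K|f_\eps|$ is negligible, so $\widehat f(\xi)=0$ in $\GenC$ for \emph{every} generalized point (not just slow scale ones).

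For the reverse direction, I would first reduce to $\widehat f=0$ as an element of $\Gen(\R^d)$. Every $\xi\in\widetilde{\R^d}_c$ is slow scale (a bounded net is majorised by $\eps^{-a}$ for every $a>0$ and $\eps$ small), so the hypothesis specializes to $\widehat f(\xi)=0$ in $\GenC$ for every compactly supported $\xi$. Since $f\in\Gen_c$ makes $\widehat f\in\Gen(\R^d)$ (the representatives $\widehat{f_\eps}$ are moderate in $\Cnt[\infty](\R^d)$ thanks to the fixed compact support of $f_\eps$), the point-value characterization \cite[Thm.~1.2.46]{GKOS} yields $\widehat f=0$ in $\Gen(\R^d)$, i.e.\ $(\widehat{f_\eps})_\eps\in\Null(\Cnt[\infty](\R^d))$. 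A diagonal extraction moreover upgrades this to the following uniform estimate on slow scale balls: for every slow scale $(R_\eps)_\eps$ and every $m\in\N$, $\sup_{|\xi|\le R_\eps}|\widehat{f_\eps}(\xi)|\le\eps^m$ for $\eps$ small. If this failed, one would extract $\eps_n\to 0$ and points $\xi_n$ with $|\xi_n|\le R_{\eps_n}$ and $|\widehat{f_{\eps_n}}(\xi_n)|>\eps_n^m$; then setting $\xi_\eps=\xi_n$ at $\eps=\eps_n$ and $\xi_\eps=0$ otherwise yields a slow scale net contradicting the hypothesis.

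Exploiting the regularity of $f\in\Gen_c^\infty(\R^d)$, one obtains for every $k\in\N$ a bound $|\widehat{f_\eps}(\xi)|\le C_k\eps^{-N}(1+|\xi|)^{-k}$ with $N$ independent of $k$ (for $\eps$ small depending on $k$). I would then recover $f$ from $\widehat f$ via Fourier inversion
\[(2\pi)^d\partial^\beta f_\eps(x)=\int_{\R^d}\widehat{f_\eps}(\xi)(i\xi)^\beta e^{ix\xi}\,d\xi,\]
splitting the integral at a slow scale $R_\eps$ (or, alternatively, working through Parseval and Sobolev embedding $\|\cdot\|_\infty\le C\|\cdot\|_{H^s}$ for $s>d/2$, with the analogous splitting of $\int(1+|\xi|^2)^s|\widehat{f_\eps}|^2\,d\xi$). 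The head, of order $R_\eps^{d+|\beta|}\sup_{|\xi|\le R_\eps}|\widehat{f_\eps}|$, is negligible by the strengthened uniform bound; the tail is controlled by the Schwartz-type decay, giving at first pass an estimate $C_k'\eps^{-N}R_\eps^{d+|\beta|-k}$.

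The main obstacle I expect is balancing the two contributions. A slow scale $R_\eps$ handles the head, but the naive tail estimate only becomes negligible once $R_\eps\ge\eps^{-\delta}$ for some $\delta>0$, which is incompatible with slow scale. Resolving this tension is the technical crux of the proof. The structural fact that makes it possible is that regularity provides the \emph{same} exponent $N$ for all orders $k$ of decay, so (after choosing a representative of $f$ whose derivative bounds hold on a controlled range of $\eps$ up to $|\alpha|\sim\log(1/\eps)$) one can let $k$ grow with $\eps$ to beat down the tail without worsening the $\eps^{-N}$ factor, closing the gap between slow scale negligibility of the head and the polynomial decay needed for the tail. The remainder of the argument—extending to derivatives $\partial^\beta f_\eps$ and checking uniformity on arbitrary compacts—is a direct iteration of the same splitting.
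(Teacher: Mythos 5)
Your outline coincides with the paper's strategy in all but one place: the forward direction, the contradiction-plus-diagonal-extraction argument, the decay estimate $\sup_\xi\abs[\big]{\xi^\beta\widehat{f_\eps}(\xi)}\le\meas(K)\sup_K\abs[\big]{\partial^\beta f_\eps}\le\eps^{-M}$ with $M$ independent of $\beta$ (the point where $\Gen_c^\infty$ enters), and Fourier inversion with a head/tail splitting are all exactly the paper's ingredients. The divergence, and the genuine gap, is at the crux you yourself flag. You state the diagonal-extraction estimate only for \emph{slow scale} radii $R_\eps$, and then with a fixed decay order the tail $\eps^{-N}R_\eps^{d+\abs{\beta}-k}$ indeed cannot be beaten down; but this tension is an artifact of the weak form in which you recorded the estimate, not an intrinsic difficulty. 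The very same contradiction argument proves more: for each $k$ there exists $N$ such that $\sup_{\abs{\xi}\le\eps^{-1/N}}\abs[\big]{\widehat{f_\eps}(\xi)}\le\eps^k$ for small $\eps$, because if this failed for every $N$ one could extract $\eps_n\to 0$ and $\xi_n$ with $\abs{\xi_n}\le\eps_n^{-1/n}$ and $\abs[\big]{\widehat{f_{\eps_n}}(\xi_n)}>\eps_n^k$, and the resulting net is \emph{still} slow scale since the exponents $1/n$ tend to $0$. This is how the paper proceeds: splitting at the radius $\eps^{-1/N}$ (a genuine negative power of $\eps$), the head contributes at most $C\eps^k\eps^{-d/N}$, and the tail is controlled with the single fixed decay order $Nk+d+1$ and the uniform bound $\eps^{-M}$, giving $\abs{f_\eps(x_\eps)}\le C\eps^{k-M-d}$; no $\eps$-dependent decay order is needed and the "tension" never arises.

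Your own proposed repair --- letting the decay order grow with $\eps$ while keeping $R_\eps$ slow scale --- can in fact be completed (define $k_\eps\to\infty$ by diagonalizing the thresholds ``for small $\eps$'' attached to each derivative order of the given representative, and take for instance $R_\eps=\eps^{-1/\sqrt{k_\eps}}$, which is slow scale and makes $R_\eps^{-k_\eps}$ super-polynomially small), but as written it is a sketch rather than a proof, and the phrase about ``choosing a representative of $f$ whose derivative bounds hold up to $\abs{\alpha}\sim\log(1/\eps)$'' points at the wrong mechanism: one cannot in general improve the representative this way (the modification would itself be negligible only with order-dependent thresholds), and one does not need to --- it is the thresholds, not the representative, that get diagonalized. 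Two minor further remarks: the preliminary reduction to $\widehat f=0$ in $\Gen(\R^d)$ via compactly supported points is correct but never used afterwards; and to conclude $f=0$ it suffices to make the zero-order point values $f(x)$, $x\in\GenR^d_c$, negligible (as in the paper), so the extension of the inversion argument to $\partial^\beta f_\eps$ and the uniformity on compacts are superfluous.
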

\begin{proof}
If $f=0$, then clearly $\widehat f(\xi) = 0$, $\forall\xi\in\GenR^d$. Conversely, let $\widehat f(\xi) = 0$, $\forall\xi\in\GenR^d_{ss}$. Let $(f_\eps)_\eps$ be a representative of $f$ with $\supp f_\eps\subseteq K\csubset\R^d$, $\forall\eps$. Since $f\in\Gen_c^\infty(\R^d)$, there exists $M\in\N$ such that for each $\beta\in\N^d$,
\begin{equation*}
\sup_{\xi\in\R}\abs{\xi}^\beta\abs[\big]{\widehat f_\eps(\xi)} =\sup_{\xi\in\R}\abs[\big]{\widehat{\partial^\beta f_\eps}(\xi)}
\le \meas(K) \sup_{x\in K}\abs[\big]{\partial^\beta f_\eps(x)}\le \eps^{-M},\text{ for small }\eps.
\end{equation*}
We show that for each $k\in\N$, there exists $N\in\N$ such that
\begin{equation}\label{eqn_Fourier-overspill}
\sup_{\abs{\xi}\le\eps^{-1/{N}}} \abs[\big]{\widehat{f_\eps}(\xi)}\le\eps^k, \text{ for small }\eps.
\end{equation}
Supposing the contrary, we would find $k\in\N$, a decreasing sequence $(\eps_n)_{n\in\N}$ tending to $0$ and $\xi_{\eps_n}$ with $\abs{\xi_{\eps_n}}\le\eps_n^{-1/n}$ and $\abs[\big]{\widehat{f_{\eps_n}}(\xi_{\eps_n})}>\eps_n^k$, $\forall n$. Defining $\xi_\eps=0$ for $\eps\notin\{\eps_n:n\in\N\}$, the net $(\xi_\eps)_\eps$ would represent some $\xi\in\GenR^d_{ss}$; yet $\widehat{f}(\xi)\ne 0$, contradicting the hypotheses.\\
Now let $k\in\N$ arbitrary. Choose $N\in\N$ as in equation (\ref{eqn_Fourier-overspill}). Then for each $[(x_\eps)_\eps]\in\GenR^d_c$,
\begin{multline*}
(2\pi)^d\abs{f_\eps(x_\eps)} = \abs[\Big]{\int_{\R^d} \widehat{f_\eps}(\xi) e^{i\xi x_\eps}\,d\xi}
\le \int_{\abs{\xi}\le \eps^{-\frac{1}{N}}} \abs[\big]{\widehat{f_\eps}(\xi)}\,d\xi + \int_{\abs{\xi}\ge \eps^{-\frac{1}{N}}} \abs[\big]{\widehat{f_\eps}(\xi)}\,d\xi\\
\le C \eps^k \eps^{-\frac{d}{N}} + C\eps^{k}\sup_{\xi\in\R}\abs{\xi}^{Nk+d+1}\abs[\big]{\widehat f_\eps(\xi)}\le C\eps^{k-M-d},
\end{multline*}
for small $\eps$ (for some $C\in\R^+$). Since $k\in\N$ and $x\in\GenR^d_c$ are arbitrary, $f=0$.
\end{proof}

\begin{thm}\label{thm_weakly_zero_char}
Let $f\in\Gen_c(\R^d)$. The following are equivalent:
\begin{enumerate}
\item $\displaystyle\int_{\R^d}f\phi=0, \quad\forall\phi\in\test(\R^d)$
\item $\displaystyle\int_{\R^d}f\phi=0, \quad\forall\phi\in\Gen_c^\infty(\R^d)$
\item $\displaystyle\widehat f(\xi)=0, \quad\forall\xi\in\GenR^d_{ss}$.
\end{enumerate}
\end{thm}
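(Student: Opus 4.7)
The plan is to establish (1)$\Leftrightarrow$(2) via the results of Section \ref{section_uniform_bdedness}, and (1)$\Leftrightarrow$(3) via Fourier analysis. The implication (2)$\Rightarrow$(1) is immediate, since any classical test function in $\test(\R^d)$ embeds canonically into $\Gen_c^\infty(\R^d)$ as a constant net. The converse (1)$\Rightarrow$(2) is exactly Theorem \ref{thm_strongly_associated} applied with $T=0$.

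For (3)$\Rightarrow$(1), I would replay the overspill step from the proof of Proposition \ref{prop_regular_Fourier_inverse}: condition (3) yields, for every $k\in\N$, some $N\in\N$ with $\sup_{|\xi|\le\eps^{-1/N}}|\widehat{f_\eps}(\xi)|\le\eps^k$ for small $\eps$. For $\phi\in\test(\R^d)$, Parseval gives $\int f_\eps\phi = (2\pi)^{-d}\int\widehat{f_\eps}(\xi)\widehat\phi(-\xi)\,d\xi$; splitting at $|\xi|=\eps^{-1/N}$ bounds the low-frequency part by $\eps^k\|\widehat\phi\|_{L^1}$, and the high-frequency part by the moderate estimate $|\widehat{f_\eps}|\le\meas(K)\eps^{-N_f}$ combined with the Schwartz tail $\int_{|\xi|>\eps^{-1/N}}(1+|\xi|)^{-m}\,d\xi = O(\eps^{(m-d)/N})$. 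Choosing $m$ large relative to $N_f$ and $N$ yields $|\int f_\eps\phi| = O(\eps^k)$ for every $k$, which is the desired negligibility.

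For (1)$\Rightarrow$(3), fix $\xi=[(\xi_\eps)]\in\GenR^d_{ss}$, choose $\chi\in\test(\R^d)$ equal to $1$ on a common compact $K\csubset\R^d$ containing all supports $\supp f_\eps$, and let $E=\test(K')$ for some $K'\supseteq\supp\chi$. Put $u_\eps(x) = \chi(x)\,e^{-i\xi_\eps\cdot x}\in E$. By (1), the functionals $T_\eps\colon\phi\mapsto\int f_\eps\phi$ form a pointwise-negligible net in $E'$. The key observation is that although $u_\eps$ itself need not be asymptotically bounded in $E$, the scaled net $\eps^N u_\eps$ is, for every fixed $N>0$: indeed $\sup|\partial^\alpha(\eps^N u_\eps)| \le C_\alpha\eps^N(1+|\xi_\eps|)^{|\alpha|}$, and the slow-scale bound $|\xi_\eps|\le\eps^{-N/(2|\alpha|)}$ (holding for $\eps$ small depending on $\alpha$) makes this at most $C'_\alpha\eps^{N/2}$ for each seminorm $p_\alpha$. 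Applying Theorem \ref{thm_(f_n)-asympt} with $E$ Fréchet (hence barreled) and the standard scale $(\eps^n)$ then gives $\eps^N T_\eps(u_\eps) = \eps^N\widehat{f_\eps}(\xi_\eps)\in\Null(\C)$ (using $\chi=1$ on $\supp f_\eps$), whence $\widehat f(\xi)=0$.

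The main obstacle is the step (1)$\Rightarrow$(3). The naive route would apply (2) directly to the test net $\chi(x)e^{-i\xi_\eps\cdot x}$, but this element is in general \emph{not} in $\Gen_c^\infty(\R^d)$: there is no single exponent $M$ bounding $\sup|\partial^\alpha(\chi e^{-i\xi_\eps\cdot x})|\le\eps^{-M}$ uniformly in $\alpha$ as soon as $|\xi_\eps|\to\infty$. The slow-scale hypothesis is precisely what allows one to absorb the missing regularity by a single factor $\eps^N$, rendering $\eps^N u_\eps$ asymptotically bounded (in fact seminorm-convergent to zero) in $\test(K')$ and thus accessible to the uniform-boundedness machinery of Section \ref{section_uniform_bdedness}.
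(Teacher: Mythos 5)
Your main line of argument is correct, but it differs in part from the paper and contains one false side claim. The equivalence $(1)\Leftrightarrow(2)$ is exactly the paper's (Theorem \ref{thm_strongly_associated} plus the trivial converse). For $(3)\Rightarrow(1)$ the paper instead forms $f\conv\phi\in\Gen_c^\infty(\R^d)$, notes $\widehat{f\conv\phi}=\widehat f\,\widehat\phi=0$ on slow-scale points, applies Proposition \ref{prop_regular_Fourier_inverse} and evaluates $(f\conv\phi)(0)$; your direct Parseval computation, with the overspill step redone for $f\in\Gen_c(\R^d)$ (legitimate, since that step uses only the pointwise moderateness of $\widehat{f_\eps}$ and hypothesis (3)) and the Schwartz decay of $\widehat\phi$ playing the role that the $\Gen^\infty$-decay of $\widehat{f_\eps}$ plays in the proposition, is an equally valid unfolding of the same estimate. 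For $(1)\Rightarrow(3)$ your route through Theorem \ref{thm_(f_n)-asympt} is sound: $\test(K')$ is Fr\'echet hence barreled, the scale $(\eps^n)$ satisfies the hypothesis of that theorem, hypothesis (1) gives pointwise negligibility of $T_\eps(\phi)=\int f_\eps\phi$, the net $\eps^N\chi e^{-i\xi_\eps\cdot x}$ is indeed asymptotically bounded by the slow-scale property, and negligibility of $\eps^N\widehat{f_\eps}(\xi_\eps)$ for a single $N$ already yields $\widehat f(\xi)=0$. So you prove $(1)\Leftrightarrow(3)$ directly with the uniform-boundedness machinery, whereas the paper closes the cycle $(1)\Rightarrow(2)\Rightarrow(3)\Rightarrow(1)$ and gets $(2)\Rightarrow(3)$ essentially for free.

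The false claim is your stated obstruction to the ``naive'' route: for slow-scale $\xi$ the net $\chi(x)e^{-i\xi_\eps\cdot x}$ \emph{does} lie in $\Gen_c^\infty(\R^d)$. In the definition of $\Gen^\infty$ only the exponent $N$ is uniform in $\alpha$; the threshold ``for small $\eps$'' may depend on $\alpha$. Since for each $\alpha$ one has $\abs{\xi_\eps}\le\eps^{-1/(2\abs{\alpha})}$ for small $\eps$, it follows that $\sup_x\abs[\big]{\partial^\alpha\big(\chi(x)e^{-i\xi_\eps x}\big)}\le C_\alpha(1+\abs{\xi_\eps})^{\abs{\alpha}}\le\eps^{-1}$ eventually, for every $\alpha$, with the single exponent $N=1$. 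This is precisely how the paper proves $(2)\Rightarrow(3)$: $e^{-i\xi x}\in\Gen^\infty(\R^d)$, so $\psi e^{-i\xi x}\in\Gen_c^\infty(\R^d)$ with $f\psi=f$, and (2) gives $\widehat f(\xi)=0$ in one line. What genuinely fails when $\abs{\xi_\eps}\to\infty$ is asymptotic boundedness in $\test(K')$ (each seminorm eventually bounded by a constant), which you correctly identified and correctly repaired with the $\eps^N$ scaling; but you conflated that with membership in $\Gen_c^\infty$. Your proof stands as written; only the claimed obstacle does not exist, and the paper's shorter passage through (2) is available.
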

\begin{proof}
$(1)\Rightarrow (2)$: by theorem \ref{thm_strongly_associated}.\\
$(2)\Rightarrow (3)$: let $\xi=[(\xi_\eps)_\eps]\in\GenR^d_{ss}$.
For $\beta\in\N^d$,
$\sup_{x\in \R^d}\abs[\big]{\partial^\beta_x e^{-i\xi_\eps x}} \le \abs{\xi_\eps}^{\abs\beta} \le \eps^{-1}$, for small $\eps$. Hence $x\mapsto e^{-i\xi x}\in \Gen^\infty(\R^d)$. As $f\in\Gen_c(\R^d)$, there exists $\psi\in\test(\R^d)$ with $f\psi=f$. Since $\psi e^{-i\xi x}\in\Gen_c^\infty(\R^d)$, the hypotheses imply that
\[\widehat f(\xi)=\int_{\R^d}f(x)\psi(x)e^{-i\xi x}\,dx=0.\]
$(3)\Rightarrow(1)$: let $\phi\in\test(\R^d)$. Then $f\conv\phi\in\Gen_c^\infty(\R^d)$, since with $K=\supp f\csubset \R^d$,
\[
\sup_{x\in\R^d}\abs[\big]{\partial^\beta (f_\eps\conv\phi)(x)}
\le\meas(K) \sup_K\abs{f_\eps}\sup_{\R^d}\abs[\big]{\partial^\beta\phi},
\]
where $\meas$ denotes the Lebesgue measure. Since for each $\xi\in\GenR^d$, $\widehat{f\conv\phi}(\xi) =\widehat f (\xi)\widehat \phi(\xi)$, we obtain by proposition \ref{prop_regular_Fourier_inverse} that $f\conv\phi=0$. In particular, $(f\conv\phi)(0)=\int_{\R^d}f(y)\phi(-y)\,dy=0$, $\forall\phi\in\test(\R^d)$.
\end{proof}
Notice that proposition \ref{prop_regular_Fourier_inverse} and theorem \ref{thm_weakly_zero_char} are consistent with the statement in \cite{PSV} that for $\Omega\subseteq\R^d$ open and $f\in\Gen^\infty(\Omega)$, $f=0$ iff $\int_{\Omega}f\phi=0$, $\forall\phi\in\test(\Omega)$.

\begin{cor}\label{cor_weak_equal_coordwise_test_functions}
Let $f\in\Gen(\R^d)$. If
\begin{equation}\label{eqn_componentwise}
\int_{\R^d}f(x)\phi_1(x_1)\cdots\phi_d(x_d)\,dx=0, \quad\forall \phi_1,\dots,\phi_d\in\test(\R),
\end{equation}
then $\int_{\R^d}f\phi=0$, $\forall\phi\in\test(\R^d)$.
\end{cor}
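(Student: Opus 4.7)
The plan is to reduce to a compactly supported $g\in\Gen_c(\R^d)$, then verify the Fourier vanishing condition of Theorem \ref{thm_weakly_zero_char}. The key auxiliary step is to upgrade the tensor-product hypothesis from ordinary test functions to test functions in $\Gen_c^\infty(\R)$, which is achieved by iterating Theorem \ref{thm_strongly_associated} one variable at a time.

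First I would fix $\phi\in\test(\R^d)$ and choose tensor-product cutoffs $\chi_1,\dots,\chi_d\in\test(\R)$ so that $\chi(x):=\chi_1(x_1)\cdots\chi_d(x_d)$ equals $1$ on $\supp\phi$. Setting $g:=f\chi\in\Gen_c(\R^d)$, the identity $\int_{\R^d}f\phi=\int_{\R^d}g\phi$ reduces the problem to showing $\int_{\R^d}g\phi=0$. Moreover $g$ inherits the hypothesis~(\ref{eqn_componentwise}): substituting $\chi_j\phi_j$ for $\phi_j$ in the hypothesis on $f$ gives $\int_{\R^d} g(x)\phi_1(x_1)\cdots\phi_d(x_d)\,dx=0$ for all $\phi_j\in\test(\R)$.

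Next I would prove by induction on $d$ that this identity then also holds for all $\phi_j\in\Gen_c^\infty(\R)$. The base case $d=1$ is Theorem \ref{thm_strongly_associated} applied with $T=0$. For the inductive step, fix $\phi_1,\dots,\phi_{d-1}\in\test(\R)$ and form the partial integral $\tilde g(x_d):=\int_{\R^{d-1}} g(x)\phi_1(x_1)\cdots\phi_{d-1}(x_{d-1})\,dx_1\cdots dx_{d-1}\in\Gen_c(\R)$; applying Theorem \ref{thm_strongly_associated} to $\tilde g$ extends the vanishing to $\phi_d\in\Gen_c^\infty(\R)$. Fixing such $\phi_d$, the partial integral $g':=\int_\R g\,\phi_d(x_d)\,dx_d\in\Gen_c(\R^{d-1})$ then satisfies the $(d-1)$-dimensional tensor-product hypothesis, and the induction hypothesis finishes the argument.

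Finally, by Theorem \ref{thm_weakly_zero_char} it suffices to show $\widehat g(\xi)=0$ for every $\xi\in\GenR^d_{ss}$. Choose further tensor-product cutoffs $\psi_1,\dots,\psi_d\in\test(\R)$ with $\psi_j\equiv 1$ on the projection of $\supp g$ onto the $j$-th axis, so that
\[
\widehat g(\xi)=\int_{\R^d} g(x)\prod_{j=1}^{d}\psi_j(x_j)\,e^{-i\xi_j x_j}\,dx.
\]
Splitting each factor into its real and imaginary parts $\psi_j(x_j)\cos(\xi_j x_j)$ and $\psi_j(x_j)\sin(\xi_j x_j)$, the slow-scale bound $\abs{\xi_{j,\eps}}\le\eps^{-1/(2k)}$ (for small $\eps$, any fixed $k$) ensures that each such factor represents an element of $\Gen_c^\infty(\R)$ (derivatives of order $k$ are bounded by $\eps^{-1}$ for small $\eps$, uniformly in $k$ as required). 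Expanding the product yields a $2^d$-term sum of tensor products in $\Gen_c^\infty(\R)$, and each summand integrates against $g$ to $0$ by the previous step. The main obstacle is precisely this inductive extension of the multilinear hypothesis: a single application of Theorem \ref{thm_strongly_associated} only handles one variable, so one must peel off the variables successively via iterated partial integration.
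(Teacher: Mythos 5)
Your proposal is correct and follows essentially the same route as the paper's own proof: iterate Theorem \ref{thm_strongly_associated} one variable at a time to upgrade the tensor-product hypothesis to factors in $\Gen_c^\infty(\R)$, then conclude via the Fourier characterization of Theorem \ref{thm_weakly_zero_char} applied to a tensor-product cutoff of $f$, using that $\psi_j(x_j)e^{-i\xi_j x_j}\in\Gen_c^\infty(\R)$ for slow-scale $\xi$. The only differences are cosmetic (you localize at the outset and split the exponentials into real and imaginary parts, which is harmless but unnecessary since the factors are already admissible complex-valued elements of $\Gen_c^\infty(\R)$).
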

\begin{proof}
Fix $\phi_2$,\dots, $\phi_d$ $\in\test(\R)$ and let $F(x_1)=\int_{\R^{d-1}}f(x)\phi_2(x_2)\cdots\phi_d(x_d)dx_2\dots dx_d$. Then $F\in\Gen(\R)$ and by hypothesis, $\int_\R F(x_1)\phi(x_1)\,dx_1=0$, $\forall\phi\in\test(\R)$. By theorem \ref{thm_strongly_associated}, the same holds for each $\phi\in\Gen_c^\infty(\R)$. Inductively, equality (\ref{eqn_componentwise}) holds for each $\phi_1$, \dots, $\phi_d\in\Gen_c^\infty(\R)$.\\
Now fix $\phi\in\test(\R^d)$. Then there exist $\psi_1,\dots,\psi_d\in\test(\R)$ such that $\phi=\psi\phi$, where $\psi(x)=\psi_1(x_1)\cdots\psi_d(x_d)\in\test(\R^d)$. Let $\xi\in\GenR^d_{ss}$. Since $x_j\mapsto e^{-i\xi_j x_j}\in\Gen^\infty(\R)$, for $j=1,\dots,d$, it follows that
\[\widehat {f\psi}(\xi)=\int_{\R^d}f(x)(\psi_1(x_1)e^{-i\xi_1 x_1})\cdots(\psi_d(x_d)e^{-i\xi_d x_d})\,dx = 0.\]
As $f\psi\in\Gen_c(\R^d)$, $\int_{\R^d} f\psi\phi=\int_{\R^d} f\phi =0$ by theorem \ref{thm_weakly_zero_char}.
\end{proof}

The following example shows that there is no analogue in $\Gen(\R^d)$ to the usual distributional argument that, if $A$ is a dense subset of $\test(\R^d)$ and $T\in\test'(\R^d)$ satisfies $\pairing{T}{\phi}=0, \forall \phi\in A$, then $T=0$.

\begin{example}\label{ex_density}
Let $A\subseteq\test(\R^d)$ be a countable, dense set. Then there exists $f\in\Gen(\R^d)$ such that
\begin{enumerate}
\item $\int_{\R^d}f\phi=0$, $\forall \phi\in A$
\item there exists $\phi_0\in\test(\R^d)$ such that $\int_{\R^d}f\phi_0 = 1$.
\end{enumerate}
\end{example}
\begin{proof}
Let $A=\{\phi_1,\phi_2,\dots\}$.
Since $\test(\R^d)$ is of uncountable dimension (it contains $\test([0,1]^d)$, which is of uncountable dimension as a Baire topological vector space of infinite dimension), there exists $\phi_0\in\test(\R^d)\setminus\lspan A$. Let $m\in\N$. By a linear algebra argument \cite[Lemma V.3.10]{DS58}, there exists $g_m\in\Cnt[\infty](\R^d)$ such that $\int_{\R^d}g_m\phi_0=1$, $\int_{\R^d}g_m\phi_j=0$, for $j=1,\dots,m$. Now choose a decreasing sequence $(\eps_n)_{n\in\N}\in (0,1]^\N$ tending to $0$ such that $\sup_{\abs{x}\le n, \abs\alpha\le n} \abs{\partial^\alpha g_n(x)}\le\eps_n^{-1}$, for each $n\in\N$. Let $f_\eps = g_n$, for each $\eps\in (\eps_{n+1},\eps_n]$, for each $n\in\N$. Since $\sup_{\abs{x}\le n,\abs\alpha\le n}\abs{\partial^\alpha f_\eps(x)}\le\eps^{-1}$, $\forall\eps\le\eps_n$, the net $(f_\eps)_\eps$ is moderate, hence represents $f\in\Gen(\R^d)$ with $\int_{\R^d} f \phi_0=1$ and $\int_{\R^d} f\phi_n=0$, $\forall n\ge 1$.
\end{proof}

\begin{prop}\label{prop_weak_equal_polar_test_functions}
Let $f\in\Gen(\R^d\setminus\{0\})$. If
\[\int_{\R^d} f(x) u(\abs x)\,v\left(\frac{x}{\abs x}\right)\,dx=0,\quad \forall u\in\test(\R^+), \forall v\in\test(\R^d),\]
then $\int_{\R^d} f\phi=0$, $\forall \phi\in\test(\R^d\setminus\{0\})$.
\end{prop}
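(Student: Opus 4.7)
The plan is to reduce to Corollary~\ref{cor_weak_equal_coordwise_test_functions} via polar coordinates together with a local-chart atlas of $S^{d-1}$. Fix a finite atlas $\{(U_\alpha,\chi_\alpha)\}_{\alpha\in A}$ of $S^{d-1}$, with diffeomorphisms $\chi_\alpha:U_\alpha\to V_\alpha\subset\R^{d-1}$, and a subordinate smooth partition of unity $\{\lambda_\alpha\}$. Writing $\phi=\sum_\alpha\phi_\alpha$ with $\phi_\alpha(x):=\lambda_\alpha(x/\abs{x})\phi(x)\in\test(\R^d\setminus\{0\})$ supported in the open cone $C_\alpha:=\{x\ne 0:x/\abs{x}\in U_\alpha\}$, it suffices to treat each $\phi_\alpha$ separately.

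Fix $\alpha\in A$. The polar chart $\Phi_\alpha:\R^+\times V_\alpha\to C_\alpha$, $\Phi_\alpha(r,y):=r\chi_\alpha^{-1}(y)$, has Jacobian $J_\alpha(r,y):=r^{d-1}\abs{\det D\chi_\alpha^{-1}(y)}$. Set $h_\alpha:=(f\comp\Phi_\alpha)J_\alpha\in\Gen(\R^+\times V_\alpha)$ and $\Psi_\alpha:=\phi_\alpha\comp\Phi_\alpha\in\test(\R^+\times V_\alpha)$, so that $\int f\phi_\alpha=\int h_\alpha\Psi_\alpha\,dr\,dy$. Choose cutoffs $\rho\in\test(\R^+)$ and $\zeta\in\test(V_\alpha)$ equal to $1$ on the projections of $\supp\Psi_\alpha$ onto the $r$- and $y$-factors respectively, and let $g_\alpha:=\rho(r)\zeta(y)h_\alpha$, extended by $0$ to $\Gen_c(\R^d)$ along the inclusion $\R^+\times V_\alpha\hookrightarrow\R^d$. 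Then $\int f\phi_\alpha=\int g_\alpha\Psi_\alpha\,dr\,dy$.

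Next, translate the hypothesis into one about $g_\alpha$. Given $v_0\in\test(\R^{d-1})$ and $u\in\test(\R)$, set $\hat v(\omega):=\zeta(\chi_\alpha(\omega))v_0(\chi_\alpha(\omega))$ for $\omega\in U_\alpha$, extended by $0$ to $S^{d-1}$, and pick $\eta\in\test(\R^+)$ with $\eta(1)=1$. Then $v(x):=\hat v(x/\abs{x})\eta(\abs{x})$ lies in $\test(\R^d)$, and $v|_{S^{d-1}}=\hat v$ vanishes outside $U_\alpha$. Applying the hypothesis with this $v$ and with $\rho u\in\test(\R^+)$, the polar change of variables gives
\[
0=\int_{\R^d}f(x)(\rho u)(\abs{x})v(x/\abs{x})\,dx=\int h_\alpha(r,y)(\rho u)(r)(\zeta v_0)(y)\,dr\,dy=\int g_\alpha(r,y)u(r)v_0(y)\,dr\,dy.
\]
Specializing $v_0=v_1\otimes\cdots\otimes v_{d-1}$ with $v_j\in\test(\R)$ yields precisely the tensor-product hypothesis of Corollary~\ref{cor_weak_equal_coordwise_test_functions} for $g_\alpha\in\Gen(\R^d)$. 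That corollary gives $\int g_\alpha\Psi\,dr\,dy=0$ for every $\Psi\in\test(\R^d)$, in particular for $\Psi_\alpha$; summing over $\alpha$ concludes $\int f\phi=0$.

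The main obstacle is purely bookkeeping: tracking the pullbacks by $\Phi_\alpha$, constructing the extension $v\in\test(\R^d)$ of a chart test function $v_0$, and handling the cutoffs $\rho,\zeta,\eta$ consistently. The conceptual content is that the product structure $\R^+\times S^{d-1}$, examined in a chart, converts the polar tensor hypothesis on $u(\abs{x})v(x/\abs{x})$ into the Cartesian tensor hypothesis covered by Corollary~\ref{cor_weak_equal_coordwise_test_functions}.
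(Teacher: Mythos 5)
Your proof is correct and follows essentially the same route as the paper: a partition of unity subordinate to a finite atlas of $S^{d-1}$, passage to polar/chart coordinates, and reduction to Corollary \ref{cor_weak_equal_coordwise_test_functions}. The only differences are bookkeeping choices (you absorb the Jacobian into $h_\alpha$ and use explicit cutoffs $\rho,\zeta,\eta$, whereas the paper fixes the radial factor $u$ inside its auxiliary function $F$ and varies it at the end), which do not change the argument.
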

\begin{proof}
Let $(U_j,\alpha_j)_{j=1,\dots,m}$ be a finite $\Cnt[\infty]$-atlas for the unit sphere $S^{d-1}=\{x\in \R^d:\abs x=1\}$ and let $(g_j)_j$ be a $\Cnt[\infty]$-partition of unity of $S^{d-1}$ subordinate to the cover $(U_j)_j$. By assumption, for $u\in\test(\R^+)$, $v\in\test(\R^d)$ and $j\in\{1,\dots,m\}$,
\[\int_{\R^d}f(x)u(\abs x)\,v\left(\frac{x}{\abs x}\right)g_j\left(\frac{x}{\abs x}\right)\,dx=0.\]
Denoting the Jacobian of the transformation $\alpha_j^{-1}$ by $J_{\alpha_j^{-1}}$ and the local coordinates by $\Theta=(\theta_1,\dots,\theta_{d-1})$, we obtain
\[\int_0^{\infty}\int_{\alpha_j(U_j)} f(r\alpha_j^{-1}(\Theta)) u(r)v(\alpha_j^{-1}(\Theta)) g_j(\alpha_j^{-1}(\Theta)) r^{d-1} \abs[]{J_{\alpha_j^{-1}}(\Theta)}\,dr d\Theta=0.\]
Let $F(r,\Theta)=f(r\alpha_j^{-1}(\Theta))g_j(\alpha_j^{-1}(\Theta))u(r)$. Since $\supp F\csubset \R^+\times \alpha_j(U_j)$, we have for each $\phi\in\test(\R)$ and $\Phi\in\test(\R^{d-1})$ that
$\int_{\R^d}F(r,\Theta)\phi(r)\Phi(\Theta)\,drd\Theta=0$.
By corollary \ref{cor_weak_equal_coordwise_test_functions}, for each $\psi\in\test(\R^d)$, also
$\int_{\R^d} F(r,\Theta)\psi(r,\Theta)\,dr d\Theta=0$.
Since $u\in\test(\R^+)$ is arbitrary, this implies that for each $\phi\in\test(\R^d\setminus\{0\})$,
\[\int_0^{\infty}\int_{\alpha_j(U_j)} f(r\alpha_j^{-1}(\Theta)) \phi(r\alpha_j^{-1}(\Theta)) g_j(\alpha_j^{-1}(\Theta)) r^{d-1} \abs[]{J_{\alpha_j^{-1}}(\Theta)}\,dr d\Theta=0.\]
Hence
\[\int_{\R^d}f\phi=\sum_{j=1}^m\int_{\R^d}f(x)\phi(x) g_j\left(\frac{x}{\abs x}\right)\,dx=0, \quad\forall\phi\in\test(\R^d\setminus\{0\}).
\]
\end{proof}

\section{Weak homogeneity in $\Gen(\R)$ and $\Gen(\R\setminus\{0\})$}
Let $\Omega=\R^d$ or $\R^{d}\setminus\{0\}$.\\
Let $f\in\Gen(\Omega)$ and $\alpha\in\R$.
Following \cite{homog}, $f$ is called weakly homogeneous of degree $\alpha$ iff
\begin{equation}\label{eqn_weak_homog}
\int_{\R^d}f(\lambda x)\phi(x)\,dx=\int_{\R^d} \lambda^\alpha f(x)\phi(x)\,dx
\end{equation}
for each $\lambda\in\R^+$ and each $\phi\in\test(\Omega)$.
By theorem~\ref{thm_strongly_associated}, this then also holds for each $\lambda\in\R^+$ and each $\phi\in\Gen_c^\infty(\Omega)$.

\begin{lemma}\label{lemma_F}
Let $f\in\Gen(\Omega)$, $\alpha\in\R$ and $\psi\in\Gen_c^\infty(\Omega)$. If equation (\ref{eqn_weak_homog}) holds for each $\lambda\in\R^+$ and each $\phi\in\{x\mapsto \psi(\mu x): \mu\in\GenR^+_c\}\subset\Gen_c^\infty(\Omega)$, then the map $F:$ $\lambda\mapsto\int_{\R^d}f(\lambda x)\psi(x)\,dx$ defines a (strongly) homogeneous generalized function of degree $\alpha$ in $\Gen(\R^+)$.
\end{lemma}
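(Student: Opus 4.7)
The plan is first to verify that $F(\lambda)=\int_{\R^d}f(\lambda x)\psi(x)\,dx$ is a well-defined element of $\Gen(\R^+)$, and then, using the point value characterization of generalized functions, to reduce the strong homogeneity claim to a change of variables followed by one application of the restricted weak homogeneity hypothesis.

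For well-definedness I would take a representative $(f_\eps)$ of $f$ and a $\Gen^\infty$-representative $(\psi_\eps)$ of $\psi$ with $\supp\psi_\eps\subseteq K\csubset\Omega$, and differentiate $F_\eps$ under the integral. Each $\partial_\lambda^k F_\eps(\lambda)$ becomes a finite linear combination of integrals $\int_{\R^d} x^\beta(\partial^\beta f_\eps)(\lambda x)\psi_\eps(x)\,dx$ with $\abs{\beta}=k$. For $\lambda$ in any compact $K'\csubset\R^+$, the set $K'\cdot K=\{\lambda x:\lambda\in K',\,x\in K\}$ is compact in $\Omega$ --- in the case $\Omega=\R^d\setminus\{0\}$ this uses that $0\notin K$. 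Moderateness of $f$ on $K'\cdot K$ together with the $\Gen^\infty$-bounds on $\psi$ then yields moderateness of $F$, and an analogous estimate using negligibility establishes independence of representatives.

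For the strong homogeneity, by \cite[Thm.~1.2.46]{GKOS} it suffices to prove $F(\lambda\mu)=\lambda^\alpha F(\mu)$ in $\GenC$ for each $\lambda\in\R^+$ and each $\mu\in\GenR^+_c$. Fix such a $\mu$, represented by $(\mu_\eps)$ with values in some $[a,b]\csubset\R^+$; then $(1/\mu_\eps)$ takes values in $[1/b,1/a]\csubset\R^+$, so $1/\mu\in\GenR^+_c$ as well. At the representative level, the change of variables $y=\mu_\eps x$ in the integrals defining $F_\eps(\lambda\mu_\eps)$ and $F_\eps(\mu_\eps)$ produces a common Jacobian factor $\mu_\eps^{-d}$ and replaces $\psi_\eps(x)$ by $\psi_\eps(y/\mu_\eps)$ on each side. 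Applying the hypothesis (\ref{eqn_weak_homog}) to the admissible test function $\phi(y)=\psi((1/\mu)y)$ then yields $\int_{\R^d} f(\lambda y)\psi(y/\mu)\,dy=\lambda^\alpha \int_{\R^d} f(y)\psi(y/\mu)\,dy$, and multiplying through by $\mu^{-d}$ gives exactly $F(\lambda\mu)=\lambda^\alpha F(\mu)$. I anticipate no serious obstacle: the argument is essentially the classical ``change the variable of integration'' trick, transported to the generalized setting via the point value characterization and carried out on representatives. The only genuinely non-trivial point is the observation that $\mu\in\GenR^+_c$ implies $1/\mu\in\GenR^+_c$, which is what makes the dilated test function $\psi((1/\mu)\,\cdot\,)$ admissible in the hypothesis.
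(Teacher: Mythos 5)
Your proposal is correct and follows essentially the same route as the paper: the same differentiation under the integral over $K'\cdot\supp\psi$ for moderateness, the same reduction of strong homogeneity to point values at $\GenR^+_c$, and the same change of variables $y=\mu_\eps x$ combined with the observation that $1/\mu\in\GenR^+_c$ makes $\psi((1/\mu)\,\cdot\,)$ an admissible test function in the hypothesis. No gaps.
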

\begin{proof}
Fix representatives $(f_\eps)_\eps$ of $f$ and $(\psi_\eps)_\eps$ of $\psi$. Consider for each $\eps\in(0,1]$, $F_\eps\in\Cnt[\infty](\R^+)$: $F_\eps(\lambda)=\int_{\R^d}f_\eps(\lambda x)\psi_\eps(x)\,dx$. Since for each $K\csubset\R^+$ and $m\in\N$,
\begin{multline*}
\sup_{\lambda\in K} \abs{D^m F_\eps(\lambda)}
=\sup_{\lambda\in K} \abs[\Bigg]{\int_{\R^d}\sum_{i_1,\dots,i_m=1}^d\partial_{i_1}\dots\partial_{i_m} f_\eps(\lambda x)x_{i_1}\dots x_{i_m}\psi_\eps(x)\,dx}\\
\le \sum_{i_1,\dots,i_m=1}^d\sup_{y\in K\cdot\supp(\psi)}\abs{\partial_{i_1}\dots\partial_{i_m} f_\eps(y)}\cdot\sup_{x\in \supp(\psi)}\abs{x_{i_1}\dots x_{i_m}\psi_\eps(x)},
\end{multline*}
$(F_\eps(\lambda))_\eps\in{\mathcal E}_M(\R^+)$ (and the definition is independent of representatives). Hence $F\in\Gen(\R^+)$, where $F(\lambda)=\int_{\R^d}f(\lambda x)\psi(x)\,dx$.\\
Further, for $\mu\in\R^+$ and $\lambda\in\GenR^+_c$,
\[
F(\mu\lambda)=\int_{\R^d}f(\mu \lambda x)\psi(x)\,dx
=\frac{1}{\lambda^d}\int_{\R^d} f(\mu y)\psi\Big(\frac{y}{\lambda}\Big)\,dy.
\]
Since $1/\lambda\in\GenR^+_c$, the hypotheses imply that
\[
F(\mu\lambda)
=\frac{\mu^{\alpha}}{\lambda^d}\int_{\R^d} f(y)\psi\Big(\frac{y}{\lambda}\Big)\,dy
=\mu^\alpha \int_{\R^d}f(\lambda x)\psi(x)\,dx=\mu^\alpha F(\lambda).
\]
\end{proof}
\begin{cor}
Let $f\in\Gen(\Omega)$ be weakly homogeneous of degree $\alpha$. Then $f$ satisfies equation (\ref{eqn_weak_homog}) for each $\lambda\in\GenR_c^+$ and each $\phi\in\Gen_c^\infty(\Omega)$.
\end{cor}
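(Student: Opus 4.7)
Fix $\phi\in\Gen_c^\infty(\Omega)$ and $\nu\in\GenR_c^+$; I must show $\int_{\R^d} f(\nu x)\phi(x)\,dx=\nu^\alpha\int_{\R^d} f(x)\phi(x)\,dx$. My plan is to combine Theorem \ref{thm_strongly_associated} with Lemma \ref{lemma_F} to reduce the claim to the structure of a strongly homogeneous element of $\Gen(\R^+)$, and then close the argument via an Euler-type identity.

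First, by Theorem \ref{thm_strongly_associated}, the weak homogeneity of $f$ already extends from $\phi\in\test(\Omega)$ to $\phi\in\Gen_c^\infty(\Omega)$ while keeping $\lambda\in\R^+$ classical. In particular, for every $\mu\in\GenR_c^+$ the function $\phi(\mu\cdot)$ lies in $\Gen_c^\infty(\Omega)$, so equation~(\ref{eqn_weak_homog}) holds with classical $\lambda$ and test function $\phi(\mu\cdot)$; this is precisely the hypothesis of Lemma \ref{lemma_F} for $\psi:=\phi$. The lemma then asserts that $F:\lambda\mapsto\int_{\R^d} f(\lambda x)\phi(x)\,dx$ is strongly homogeneous of degree $\alpha$ in $\Gen(\R^+)$. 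Setting $G(x):=x^{-\alpha}F(x)\in\Gen(\R^+)$, strong homogeneity becomes the scale invariance $G(\mu x)=G(x)$ in $\Gen(\R^+)$ for every classical $\mu\in\R^+$. Differentiating this one-parameter family of identities in $\mu$ at $\mu=1$ yields $xG'(x)=0$ in $\Gen(\R^+)$; multiplying by the smooth function $1/x$ on $\R^+$ gives $G'(x)=0$, so $G$ is a generalized constant equal to $G(1)=F(1)$. Hence $F(x)=F(1)\,x^\alpha$ in $\Gen(\R^+)$, and by the point-value characterization of generalized functions this equality persists at $\nu\in\GenR_c^+$, yielding the desired equation.

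The main obstacle is the differentiation step: the nullity of $\bigl(G_\eps(\mu x)-G_\eps(x)\bigr)_\eps$ in $\Cnt[\infty](\R^+)$ holds only pointwise in the classical parameter $\mu$, so one cannot naively differentiate a null net in $\mu$. This is handled via a Taylor expansion $G_\eps((1+h)x)-G_\eps(x)=h\,xG_\eps'(x)+O(h^2)$ combined with a careful choice of $h=h(\eps)$ exploiting the moderateness of $G_\eps''$ and the uniform nullity guaranteed by the arguments of Section \ref{section_uniform_bdedness}, which reduces the asserted negligibility of $xG'$ in $\Cnt[\infty](\R^+)$ to the pointwise negligibility already at our disposal.
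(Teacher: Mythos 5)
Your reduction coincides with the paper's: Theorem \ref{thm_strongly_associated} upgrades the test functions in (\ref{eqn_weak_homog}) to $\Gen_c^\infty(\Omega)$, the dilates $x\mapsto\phi(\mu x)$ with $\mu\in\GenR^+_c$ remain in $\Gen_c^\infty(\Omega)$, so Lemma \ref{lemma_F} makes $F(\lambda)=\int_{\R^d}f(\lambda x)\phi(x)\,dx$ strongly homogeneous of degree $\alpha$ in $\Gen(\R^+)$, and the final evaluation at point values is as in the paper. The divergence, and the gap, is in the passage from strong homogeneity to $F(\lambda)=F(1)\lambda^\alpha$. The paper simply invokes \cite[Lemma~4.3]{homog}; you try to reprove it by differentiating $G(\mu x)=G(x)$ in $\mu$ at $\mu=1$. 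As you note, this identity is only available for each \emph{fixed} classical $\mu$, with an $\eps$-threshold depending on $\mu$. Your proposed repair needs $\sup_{x\in K}\abs{G_\eps((1+h(\eps))x)-G_\eps(x)}\le h(\eps)\,\eps^m$ with $h(\eps)$ of size about $\eps^{m+N}$ (to beat the moderate bound $\eps^{-N}$ on $G_\eps''$), i.e.\ negligibility along an $\eps$-dependent dilation tending to $1$, equivalently uniformly in $\mu$ near $1$. This does not follow from the fixed-$\mu$ hypotheses plus moderateness (a diagonal choice of $h(\eps)$ from the thresholds $\eps_0(\mu,m)$ gives only an uncontrolled rate $h(\eps)\to 0$, which cannot compensate $\eps^{-N}$), and it is not supplied by Section \ref{section_uniform_bdedness}: those theorems concern linear functionals $T_\eps\in E'$ that are moderate or negligible pointwise on \emph{all} of a barreled space $E$, whereas here the parameter enters nonlinearly and you only know negligibility of $T_\eps(\psi_\mu)$ on the particular family $\psi_\mu(x)=\mu^{-d}\phi(x/\mu)-\mu^\alpha\phi(x)$; negligibility on a family does not propagate to limits or to $\eps$-dependent members (the paper's Example \ref{ex_density} is precisely a warning of this kind). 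So the hardest step of your argument is asserted rather than proven.

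The statement you are reproving is exactly \cite[Lemma~4.3]{homog}, and citing it closes the proof immediately, which is what the paper does. If you want a self-contained argument, one workable route to the missing uniformity is Baire category in the dilation parameter: for fixed $K\csubset\R^+$ and $m$, the sets $A_k=\{\mu\in[1,2]:\ \sup_{x\in K}\abs{G_\eps(\mu x)-G_\eps(x)}\le\eps^m \text{ for all } \eps\le 1/k\}$ are closed and cover $[1,2]$, hence some $A_k$ contains an interval; composing with one fixed dilation transports this to a uniform estimate for all $\mu$ in a neighbourhood of $1$, after which either your Taylor step or a simple chaining over finitely many multiplicative steps gives $\sup_{x\in K}\abs{G_\eps(x)-G_\eps(1)}\le C\eps^m$, and negligibility of the derivatives then follows from moderateness by the standard zeroth-order characterization of negligibility.
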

\begin{proof}
Let $\phi\in\Gen_c^\infty(\Omega)$. Let $F(\lambda)=\int_{\R^d} f(\lambda x)\phi(x)\,dx$. As $F\in\Gen(\R^+)$ is strongly homogeneous of degree $\alpha$, by \cite[Lemma~4.3]{homog}, $F(\lambda) = c\lambda^\alpha$, for some $c\in\GenC$. Since $c=F(1)$, the point values at $\lambda\in\GenR_c^+$ yield the result.
\end{proof}

\begin{thm}\label{thm_weak_Euler}
Let $f\in\Gen(\Omega)$. Then $f$ is weakly homogeneous of degree $\alpha$ iff $f$ satisfies the corresponding Euler equation in the sense of generalized distributions, i.e.,
\[\int_{\R^d} \sum_{i=1}^d x_i \partial_i f(x) \phi(x)\,dx=\alpha\int_{\R^d} f(x)\phi(x)\,dx,\]
for each $\phi\in\Gen_c^\infty(\Omega)$.
\end{thm}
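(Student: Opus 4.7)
The plan is to handle both implications through the single-variable function $F(\mu)=\int_{\R^d} f(\mu x)\phi(x)\,dx\in\Gen(\R^+)$ from lemma \ref{lemma_F}, for $\phi$ a fixed test function. At the representative level one has $F'_\eps(\mu)=\int\sum_i x_i\partial_i f_\eps(\mu x)\phi(x)\,dx$, and the change of variables $y=\mu x$ reshapes this into
\[F'_\eps(\mu)=\mu^{-d-1}\int_{\R^d}\sum_{i=1}^d y_i\partial_i f_\eps(y)\phi(y/\mu)\,dy,\]
which will serve as the bridge to the Euler residual $g:=\sum_i x_i\partial_i f-\alpha f\in\Gen(\Omega)$.

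For the direction $(\Rightarrow)$, I would fix $\phi\in\Gen_c^\infty(\Omega)$ and invoke the corollary preceding the theorem, which already gives $F(\mu)=\mu^\alpha F(1)$ at every $\mu\in\GenR^+_c$. Since elements of $\Gen(\R^+)$ are determined by their compactly supported generalized point values \cite[Thm.~1.2.46]{GKOS}, this automatically lifts to an identity in $\Gen(\R^+)$, so we may differentiate it. Evaluating $F'(\mu)=\alpha\mu^{\alpha-1}F(1)$ at the classical point $\mu=1$ produces $F'(1)=\alpha F(1)$ in $\GenC$; reading off $F'_\eps(1)=\int\sum_i x_i\partial_i f_\eps(x)\phi(x)\,dx$ identifies the left-hand side with the Euler equation tested against $\phi$.

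For $(\Leftarrow)$, I would fix $\phi\in\test(\Omega)$ and $\lambda\in\R^+$ and aim directly at (\ref{eqn_weak_homog}) for this pair. Setting $H_\eps(\mu):=\mu^{-\alpha}F_\eps(\mu)$, the formula above collapses into
\[H'_\eps(\mu)=\mu^{-\alpha-d-1}\int_{\R^d} g_\eps(y)\phi(y/\mu)\,dy,\]
where $g_\eps$ is a representative of $g$. The classical fundamental theorem of calculus and Fubini, applied to the smooth representatives, then give
\[H_\eps(\lambda)-H_\eps(1)=\int_{\R^d} g_\eps(y)\eta(y)\,dy,\qquad \eta(y):=\int_1^\lambda \mu^{-\alpha-d-1}\phi(y/\mu)\,d\mu,\]
and $\eta\in\test(\Omega)$ since $\supp\eta\subseteq[\min(1,\lambda),\max(1,\lambda)]\cdot\supp\phi\csubset\Omega$. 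The Euler hypothesis, tested against $\eta$, says exactly $(\int g_\eps(y)\eta(y)\,dy)_\eps\in\Null(\C)$, so $F(\lambda)=\lambda^\alpha F(1)$ in $\GenC$.

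The subtle point in both directions is the transition from pointwise identities in $\GenC$ (at each classical $\mu$) to genuine identities in $\Gen(\R^+)$ that are open to differentiation or integration in $\mu$. In the forward direction this is handled for free by the preceding corollary, which lifts the pointwise identity from classical $\mu$ up to every $\mu\in\GenR^+_c$. In the backward direction I plan to sidestep it by performing both the fundamental theorem of calculus and Fubini at the level of classical representatives, so that the Euler hypothesis may be applied directly to the single classical test function $\eta$.
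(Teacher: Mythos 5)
Your proof is correct. The forward direction is essentially the paper's own: both pass through lemma \ref{lemma_F} and the corollary to get $F(\lambda)=\lambda^{\alpha}F(1)$ as an identity in $\Gen(\R^+)$, differentiate, take the point value at $\lambda=1$, and finish by differentiation under the integral sign. Where you genuinely diverge is in the converse. The paper stays inside $\Gen(\R^+)$: it computes $G'(\lambda)$ for $G(\lambda)=F(\lambda)\lambda^{-\alpha}$, shows $G'=0$ by testing the Euler hypothesis against the generalized test functions $x\mapsto\phi(x/\lambda)$ with \emph{generalized} $\lambda\in\GenR^+_c$ (so it really uses the hypothesis on all of $\Gen_c^\infty(\Omega)$ and the point-value characterization), and then invokes the fact that a generalized function with vanishing derivative is a generalized constant \cite[1.2.35]{GKOS}. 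You instead apply the fundamental theorem of calculus to $H_\eps$ on $[1,\lambda]$ at the level of representatives and use Fubini to convert the accumulated Euler residual into a pairing with the single classical test function $\eta(y)=\int_1^\lambda\mu^{-\alpha-d-1}\phi(y/\mu)\,d\mu\in\test(\Omega)$ (your support argument is valid for both $\Omega=\R^d$ and $\Omega=\R^d\setminus\{0\}$, since scaling by a compact interval of positive reals preserves compact subsets of either). This buys you something concrete: your converse needs the Euler equation only against classical test functions, i.e.\ the hypothesis in its literal distributional form, without routing through theorem \ref{thm_strongly_associated}, generalized point values, or the generalized-constant lemma; it is elementary and self-contained. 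What the paper's route buys in exchange is a stronger conclusion delivered in one stroke, namely $G(\lambda)=G(1)$ for all $\lambda\in\GenR^+_c$ and all $\phi\in\Gen_c^\infty(\Omega)$, i.e.\ homogeneity already at generalized dilations and generalized test functions, which your version recovers only a posteriori by feeding the established weak homogeneity back into the corollary.
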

\begin{proof}
$\Rightarrow$: Let $\phi\in\Gen_c^\infty(\Omega)$. Let $F(\lambda)=\int_{\R^d} f(\lambda x)\phi(x)\,dx\in\Gen(\R^+)$. As $F(\lambda) = c\lambda^\alpha$, for some $c\in\GenC$,
\[0=\frac{d}{d\lambda}\Big(\frac{F(\lambda)}{\lambda^\alpha}\Big) = \frac{\lambda F'(\lambda)-\alpha F(\lambda)}{\lambda^{\alpha + 1}}\]
as a generalized function in $\Gen(\R^+)$. Taking the point value at $\lambda = 1$, $F'(1) = \alpha F(1)$. The result follows by differentiation under the integral sign.\\
$\Leftarrow$: Let $\phi\in\Gen_c^\infty(\Omega)$. Let $F\in\Gen(\R^+)$ be as before. Let $G(\lambda)=\frac{F(\lambda)}{\lambda^\alpha}$. Then
\begin{align*}
G'(\lambda)&= \frac{1}{\lambda^{\alpha+1}}\int_{\R^d}\Big(\sum_{i=1}^d \partial_i f(\lambda x) \lambda x_i - \alpha f(\lambda x)\Big)\phi(x)\,dx\\
&=\frac{1}{\lambda^{\alpha+d+1}}\int_{\R^d}\Big(\sum_{i=1}^d \partial_i f(x) x_i - \alpha f(x)\Big)\phi\Big(\frac{x}{\lambda}\Big)\,dx= 0
\end{align*}
by the hypotheses, since $x\mapsto\phi(x/\lambda)\in\Gen_c^\infty(\Omega)$, for each $\lambda\in\GenR^+_c$. Hence $G$ is a generalized constant \cite[1.2.35]{GKOS}, i.e., $G(\lambda)=G(1)$, $\forall \lambda\in\GenR^+_c$.
\end{proof}

\begin{thm}\label{thm_weak_homog_pierced}
Let $f\in\Gen(\R\setminus\{0\})$ be weakly homogeneous of degree $\alpha$. Then there exist $c_1,c_2\in\GenC$ such that for each $\phi\in\test(\R\setminus\{0\})$,
\[\int_\R f(x)\phi(x)\,dx = c_1\int_{\R^-} \abs{x}^\alpha\phi(x)\,dx + c_2\int_{\R^+} \abs{x}^\alpha\phi(x)\,dx.\]
\end{thm}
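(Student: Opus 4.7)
The plan is to reduce to $\R^+$, apply Theorem~\ref{thm_weak_Euler} (the Euler equation) and identify, via an explicit first-order ODE, exactly the one-parameter family of functionals $\phi\mapsto \int f\phi$ on $\test(\R^+)$ that are compatible with weak homogeneity of degree $\alpha$. Since the support of any $\phi\in\test(\R\setminus\{0\})$ is compact in $\R\setminus\{0\}$, its restrictions $\phi_+=\phi|_{\R^+}$ and $\phi_-=\phi|_{\R^-}$ both lie in $\test(\R^\pm)$ with $\phi=\phi_++\phi_-$. The substitution $x\mapsto -x$ sends $f|_{\R^-}\in\Gen(\R^-)$ to an element of $\Gen(\R^+)$ that is again weakly homogeneous of degree $\alpha$; so it suffices to exhibit, for every weakly homogeneous $f\in\Gen(\R^+)$, a constant $c\in\GenC$ with $\int_{\R^+}f\phi\,dx=c\int_{\R^+}x^\alpha\phi(x)\,dx$ for all $\phi\in\test(\R^+)$.

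On $\R^+$, Theorem~\ref{thm_weak_Euler} gives $\int x f'(x)\phi(x)\,dx=\alpha\int f(x)\phi(x)\,dx$ for every $\phi\in\Gen_c^\infty(\R^+)$, hence in particular for every $\phi\in\test(\R^+)$. Integration by parts on representatives (legal because $x\phi\in\test(\R^+)$) rewrites this as
\[\int_{\R^+}f(x)\bigl[(\alpha+1)\phi(x)+x\phi'(x)\bigr]\,dx=0.\]
The key observation is the identity $(\alpha+1)\phi(x)+x\phi'(x)=x^{-\alpha}\bigl(x^{\alpha+1}\phi(x)\bigr)'$, which exhibits the map $L:\phi\mapsto(\alpha+1)\phi+x\phi'$ as an explicitly invertible first-order operator. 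Given $\chi\in\test(\R^+)$ with $\int_0^\infty t^\alpha\chi(t)\,dt=0$, the function $\phi(x)=x^{-\alpha-1}\int_0^x t^\alpha\chi(t)\,dt$ lies in $\test(\R^+)$ (it vanishes below $\supp\chi$ by construction, and above $\supp\chi$ by the moment condition) and satisfies $L\phi=\chi$. Conversely, any $\chi=L\phi$ automatically has vanishing $\alpha$-moment since $\int_0^\infty t^\alpha L\phi(t)\,dt=\int_0^\infty (t^{\alpha+1}\phi(t))'\,dt=0$. Consequently $\int f\chi\,dx=0$ precisely for those $\chi\in\test(\R^+)$ satisfying $\int t^\alpha\chi\,dt=0$.

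Fix any $\psi_0\in\test(\R^+)$ with $\int t^\alpha\psi_0\,dt=1$ (take any nonzero nonnegative bump and normalise) and set $c_2:=\int f\psi_0\in\GenC$. For arbitrary $\phi\in\test(\R^+)$, the decomposition $\phi=\bigl(\int t^\alpha\phi\,dt\bigr)\psi_0+\chi$ with $\chi$ of vanishing $\alpha$-moment yields
\[\int f\phi\,dx=\Bigl(\int t^\alpha\phi(t)\,dt\Bigr)c_2=c_2\int_{\R^+}x^\alpha\phi(x)\,dx.\]
The same argument on $\R^-$ (or, equivalently, the reflection $x\mapsto -x$) produces a constant $c_1\in\GenC$ handling the other half. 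Summing the two restricted integrals for $\phi\in\test(\R\setminus\{0\})$ gives the theorem.

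The only creative step — and therefore the main obstacle — is the algebraic factorisation $(\alpha+1)\phi+x\phi'=x^{-\alpha}(x^{\alpha+1}\phi)'$, which converts the codimension-one constraint provided by the Euler equation into the transparent moment condition $\int x^\alpha\chi\,dx=0$. Once this is noticed everything else is an elementary primitive and a one-dimensional linear-algebra decomposition; no genuinely Colombeau-specific difficulty arises, since the test functions $\phi,\,\psi_0,\,\chi$ involved are standard, so integration by parts on representatives is unproblematic and the extensions to $\Gen_c^\infty$ provided by Section~\ref{section_uniform_bdedness} are not even needed at this final step.
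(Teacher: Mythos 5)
Your proof is correct and follows essentially the same route as the paper: Theorem \ref{thm_weak_Euler} plus integration by parts reduces the problem to showing that $\phi\mapsto\int f\phi$ annihilates an explicitly parametrized codimension-one subspace of test functions on each half-line, and your factorisation $(\alpha+1)\phi+x\phi'=x^{-\alpha}\bigl(x^{\alpha+1}\phi\bigr)'$ is precisely the paper's substitution $g=f/\abs{x}^\alpha$, $\psi=\phi/\abs{x}^\alpha$ in conjugated form, the only cosmetic difference being that the paper works on $\R\setminus\{0\}$ at once with unweighted moment conditions after dividing out $\abs{x}^\alpha$, whereas you treat the half-lines separately with $x^\alpha$-weighted moments. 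One small point of hygiene: Theorem \ref{thm_weak_Euler} is stated for $\Omega=\R^d$ or $\R^d\setminus\{0\}$, so rather than invoking it ``on $\R^+$'' for the reflected function, it is cleaner to apply it to $f\in\Gen(\R\setminus\{0\})$ and simply restrict to test functions $\phi\in\test(\R^\pm)\subset\Gen_c^\infty(\R\setminus\{0\})$, which yields exactly the identities you use.
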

\begin{proof}
Let $g(x)=\frac{f(x)}{\abs{x}^\alpha}$. For $\phi\in\test(\R\setminus\{0\})$ and $\lambda\in\R^+$, since $\frac{\phi(x)}{\abs{x}^\alpha}\in\test(\R\setminus\{0\})$,
\[
\int_\R g(\lambda x) \phi(x)\, dx
= \int_\R f(\lambda x) \frac{\phi(x)}{\lambda^\alpha \abs{x}^\alpha}\,dx
= \int_\R f(x) \frac{\phi(x)}{\abs{x}^\alpha}\,dx
= \int_\R g(x) \phi(x)\,dx.
\]
Since $g\in\Gen(\R\setminus\{0\})$, by theorem \ref{thm_weak_Euler},
\[\int_\R g'(x)x\phi(x)\,dx =0, \quad\forall\phi\in\test(\R\setminus\{0\}).\]
Since $\test(\R\setminus\{0\})=\{x\phi:\phi\in\test(\R\setminus\{0\})\}$ and by partial integration,
\[\int_\R g(x)\phi'(x)\,dx=0, \quad\forall\phi\in\test(\R\setminus\{0\}).\]
Fix $\phi_1\in\test(\R\setminus\{0\})$ with $\int_{\R^-}\phi_1=1$, $\int_{\R^+}\phi_1=0$ and $\phi_2\in\test(\R\setminus\{0\})$ with $\int_{\R^-}\phi_2=0$, $\int_{\R^+}\phi_2=1$. Let $\phi\in\test(\R\setminus\{0\})$ arbitrary. Call $\psi=\phi - (\int_{\R^-}\phi)\phi_1 - (\int_{\R^+}\phi)\phi_2$.
Since $\big\{\phi':\phi\in\test(\R\setminus\{0\})\big\} = \big\{\phi\in\test(\R\setminus\{0\}): \int_{\R^-}\phi = \int_{\R^+}\phi =0\big\}$,
$\int_\R g(x)\psi(x)\,dx=0$, hence with $c_j=\int_\R g(x)\phi_j(x)\,dx$ ($j=1,2$),
\[\int_\R g(x)\phi(x)\,dx = c_1\int_{\R^-}\phi(x)\,dx + c_2\int_{\R^+}\phi(x)\,dx,\]
for any $\phi\in\test(\R\setminus\{0\})$. The statement follows from the fact that $\test(\R\setminus\{0\}) = \{\frac{\phi}{\abs{x}^\alpha}:\phi\in\test(\R\setminus\{0\})\}$.
\end{proof}

The following example shows that the extension of the previous result to the case $\Omega=\R$ is nontrivial.
\begin{example}[Weak point support]\label{ex_weak_point_support}
There exists $f\in\Gen(\R)$ such that $\int_\R f\phi=0$, $\forall\phi\in\test(\R\setminus\{0\})$, but yet
$\int_\R f\phi_0\ne 0$ for some $\phi_0\in\test(\R)$ with $D^k\phi_0(0)=0$, $\forall k\in\N$.\\
In particular, for $\phi\in\test(\R)$, $\int_\R f\phi$ is not determined by $D^k\phi(0)$, $k\in\N$, and in particular, $f$ is not weakly equal to a linear combination of derivatives of $\delta$.
\end{example}
\begin{proof}
Let $\caninf=[(\eps)_\eps]\in\GenR$ and let $f(x)=\caninf^{x\abs{\ln\caninf}}$, for $x\in\GenR_c$, $x\ge\caninf$. Since for each $k\in\N$, $\sup_{x\ge 0} \abs{D^k(\eps^{x\abs{\ln\eps}})} = \sup_{x\ge 0}\abs{\ln\eps}^{2k}\eps^{x\abs{\ln\eps}} \le\abs{\ln\eps}^{2k} \le\eps^{-1}$, for small $\eps$, $f$ can be extended by means of a moderate cut-off to a function in $\Gen(\R)$ with $f\ge 0$ and $\restr{f}{\R^-}=0$. As $f(x)=0$, for each $x\in\GenR^+_c$, $\int_\R f\phi= 0$, for each $\phi\in\test(\R\setminus\{0\})$. Let $\phi_0(x)=e^{\frac{1}{x(x-2)}}\in\test([0,2])$.
Let $\rho=1/\abs{\ln\caninf}$. Since $f\phi_0\ge 0$ and $\alpha\le\rho\le 1$,
\[
\int_\R f\phi_0
\ge \int_{\rho}^{2\rho}\caninf^{x\abs{\ln\caninf}} e^{\frac{1}{x(x-2)}}\,dx \ge \caninf^2 e^{\frac{1}{\rho(\rho-2)}}\rho,
\]
since $\caninf^{x\abs{\ln\caninf}}$ is decreasing and $e^{\frac{1}{x(x-2)}}$ is increasing on $[0,1]$. As $e^{\frac{1}{\rho(\rho-2)}}\ge e^{-\frac{1}{\rho}}=\caninf$, $\int_\R f\phi_0\ne 0$.
\end{proof}

\begin{lemma}\label{lemma_princ_val}
Let $m\in\N$, $m\ge 1$. Let $\phi_0\in\test(\R)$ with $\phi_0=1$ on a neighbourhood of $0$.
\begin{enumerate}
\item There exist constants $c_j\in\C$ such that for each $\phi\in\test(\R)$,
\[\int_\R \Big(\phi(x) - \sum_{j=0}^{m-1}\frac{D^{j}\phi(0)}{j!}x^j\phi_0(x)\Big)\,\frac{dx}{x^m} = \pairing{x^{-m}}{\phi} + \sum_{j=0}^{m-1}c_j D^{j}\phi(0).\]
\item There exist constants $c_j\in\C$ such that for each $\phi\in\test(\R)$,
\[\int_{\R^+} \Big(\phi(x) - \sum_{j=0}^{m-1}\frac{D^{j}\phi(0)}{j!}x^j\phi_{0}(x)\Big)\,\frac{dx}{x^m} = \pairing{\FinPart{\frac{H(x)}{x^m}}}{\phi} + \sum_{j=0}^{m-1}c_j D^{j}\phi(0).\]
\end{enumerate}
\end{lemma}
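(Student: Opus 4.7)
The plan is to recognize the left-hand side of~(1) as a distribution $L_m\in\test'(\R)$ and to identify $T:=L_m-x^{-m}$ as a distribution supported at $\{0\}$ of order at most $m-1$, so that $T=\sum_{j=0}^{m-1}a_j\delta^{(j)}$ for suitable $a_j\in\C$; setting $c_j=(-1)^j a_j$ then yields the stated identity. First I would verify that $L_m$ is indeed a distribution: since $\phi_0\equiv 1$ on a neighbourhood of $0$, Taylor's theorem gives
\[\phi(x)-\sum_{j=0}^{m-1}\frac{D^j\phi(0)}{j!}x^j\phi_0(x)=O(\abs{x}^m)\quad\text{as } x\to 0,\]
so the integrand is bounded near $0$; outside a small neighbourhood of $0$ it is smooth and compactly supported in $\supp\phi\cup\supp\phi_0$, and standard seminorm estimates then give continuity of $L_m:\test(\R)\to\C$.

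Next, I would show that the defect $T$ is supported at the origin. If $\phi\in\test(\R)$ satisfies $D^j\phi(0)=0$ for $j=0,\dots,m-1$, then $\phi(x)/x^m$ is continuous, the Taylor cutoff vanishes, and
\[L_m(\phi)=\int_\R\frac{\phi(x)}{x^m}\,dx=\pairing{x^{-m}}{\phi},\]
the second equality being the defining property of the Hadamard regularization $x^{-m}$ (cf.~\cite{Estrada}). In particular $T(\phi)=0$ whenever $0\notin\supp\phi$, so $\supp T\subseteq\{0\}$ and the classical structure theorem gives $T=\sum_{j=0}^{N}a_j\delta^{(j)}$ for some $N\in\N$. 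To bound the order, I would test $T$ against $\phi_k(x):=x^k\eta(x)$ with $\eta\in\test(\R)$, $\eta\equiv 1$ near $0$, and $k\ge m$: the first $m$ Taylor coefficients of $\phi_k$ at $0$ vanish, hence $T(\phi_k)=0$, while the expansion of $D^j(x^k\eta)(0)$ yields a triangular linear system in the $a_j$ whose only solution (using the freedom to prescribe the jet of $\eta$ at $0$) is $a_k=0$ for all $k\ge m$. Hence $T=\sum_{j=0}^{m-1}a_j\delta^{(j)}$, establishing~(1).

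Part~(2) is proved by the exact same argument with $\R$ replaced by $\R^+$ and $x^{-m}$ replaced by $\FinPart{H(x)/x^m}$: both regularizations reduce to the absolutely convergent integral $\int_{\R^+}\phi(x)x^{-m}\,dx$ on test functions vanishing to order $m$ at $0$, so the point-support and order-bound steps apply verbatim. The main obstacle is the order-bound step, which hinges on the triangular structure of $\pairing{\delta^{(j)}}{x^k\eta}$ in $k$; everything else reduces to the Taylor estimate above and the standard structure theorem for distributions supported at a point.
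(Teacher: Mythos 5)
Your argument is correct, but it takes a genuinely different route from the paper. The paper simply quotes (and sketches a proof of) the explicit regularized-integral formulas $\pairing{x^{-m}}{\phi}=\lim_{R\to\infty}\int_{-R}^{R}\big(\phi(x)-\sum_{j<m}\tfrac{D^j\phi(0)}{j!}x^j\big)x^{-m}\,dx$ and the analogous Estrada--Kanwal formula for $\FinPart{H(x)/x^m}$, and then reads off the constants $c_j$ directly by subtracting these from the left-hand sides (the difference is visibly $\sum_j \tfrac{D^j\phi(0)}{j!}$ times integrals involving $1-\phi_0$, which are absolute or symmetric-limit convergent constants). You instead argue structurally: the left-hand side defines a distribution $L_m$, the defect $T=L_m-x^{-m}$ annihilates every $\phi$ with vanishing $m$-jet at $0$, hence is supported at the origin, and testing against $x^k\eta$ ($k\ge m$, $\eta\equiv 1$ near $0$) kills the coefficients $a_k$ with $k\ge m$ --- note that with this choice of $\eta$ the resulting system is in fact diagonal, $T(x^k\eta)=(-1)^k k!\,a_k$, so the remark about prescribing the jet of $\eta$ is unnecessary. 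Two comments on what each approach buys. First, your key input, that $\pairing{x^{-m}}{\phi}$ (resp.\ $\pairing{\FinPart{H(x)/x^m}}{\phi}$) reduces to the ordinary convergent integral when $D^j\phi(0)=0$ for $j<m$, is essentially the same fact the paper invokes; depending on how Estrada--Kanwal define $x^{-m}$, it may require the same short induction via $Dx^{-m}=-mx^{-m-1}$ and integration by parts that the paper sketches, so you are not avoiding that ingredient, only the bookkeeping with explicit constants. Second, your detour through continuity of $L_m$, the structure theorem for point-supported distributions, and the order bound can be bypassed entirely: once you know the linear functional $T$ vanishes on the subspace $\{\phi\in\test(\R):D^j\phi(0)=0,\ j=0,\dots,m-1\}$, which has codimension $m$ with the functionals $\phi\mapsto D^j\phi(0)$ spanning a complement of its annihilator, plain linear algebra already gives $T(\phi)=\sum_{j<m}c_jD^j\phi(0)$, with no topology needed. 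So your proof is valid, slightly longer than necessary, and trades the paper's explicit computation for a soft structural argument.
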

\begin{proof}
(1) Follows easily using the fact that
\[
\pairing{x^{-m}}{\phi}=\lim_{R\to+\infty} \int_{-R}^R \Big(\phi(x) - \sum_{j=0}^{m-1} \frac{D^j\phi(0)}{j!} x^j\Big)\,\frac{dx}{x^m}.\]
(The last equation holds, e.g., by induction using the formula $D x^{-m} = -m x^{-m-1}$ \cite[\S 2.4]{Estrada} and partial integration.)\\
(2) Follows easily using the fact that \cite[\S 2.4]{Estrada}
\begin{multline*}
\pairing{\FinPart{\frac{H(x)}{x^m}}}{\phi}\\
=\int_1^\infty \frac{\phi(x)}{x^m}\,dx + \int_0^1 \Big(\phi(x) - \sum_{j=0}^{m-1} \frac{D^j\phi(0)}{j!} x^j\Big)\,\frac{dx}{x^m} - \sum_{j=0}^{m-2}\frac{D^j\phi(0)}{j!(m-j-1)}.
\end{multline*}
\end{proof}

\begin{prop}\label{prop_weakly_0_outside_origin}
Let $\caninf=[(\eps)_\eps]\in\GenR$. Let $f\in\Gen(\R)$ with $\int_\R f\phi=0$, $\forall \phi\in\test(\R)$ with $D^j\phi(0)=0$, $\forall j\in\N$. Let $K\csubset\R$ and $n\in\N$. Then
\[
(\exists m\in\N) (\forall\phi\in\test(K) \text{ with } D^j\phi(0)=0,\forall j\le m) \Big(\abs[\Big]{\int_\R f\phi} \le \caninf^n\Big).
\]
\end{prop}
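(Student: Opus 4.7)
Write $T_\eps(\psi):=\int_\R f_\eps\psi$ and let $\test_\infty(K):=\{\psi\in\test(K):D^j\psi(0)=0\ \forall j\in\N\}$, a closed (hence Fréchet) subspace of $\test(K)$. The hypothesis says exactly that $(T_\eps|_{\test_\infty(K)})_\eps\in\Null(\test_\infty(K)')$ in the topology of pointwise convergence. The strategy is to combine a negligible analogue of Theorem~\ref{thm_asympt_uniform_bd}(3) on $\test_\infty(K)$ with a scaled cutoff decomposition of $\phi$.

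The auxiliary statement I need is: if $E$ is Fréchet with defining seminorms $(p_k)_{k\in\N}$ and $(S_\eps)_\eps\in\Null(E')$ pointwise, then for every $n'\in\N$ there exist $M\in\N$ and $\eps_0\in(0,1]$ with $|S_\eps(u)|\le\eps^{n'}\max_{k\le M}p_k(u)$ for all $u\in E$, $\eps\le\eps_0$. Its proof mirrors Theorem~\ref{thm_asympt_uniform_bd}(3): if it fails, extract a decreasing $\eps_j\to 0$ and $u_j\in E$ with $|S_{\eps_j}(u_j)|>\eps_j^{n'}\max_{k\le j}p_k(u_j)$, normalize to $v_j:=u_j/\max_{k\le j}p_k(u_j)$ so that $(v_j)_j$ is bounded in $E$, and invoke Theorem~\ref{thm_(f_n)-asympt} (the scale $(\eps^n)$ satisfies $\lim_\eps \eps^{n+1}/\eps^n=0$) to force $S_{\eps_j}(v_j)$ to be rapidly decreasing, contradicting $|S_{\eps_j}(v_j)|>\eps_j^{n'}$.

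Apply this lemma to $E=\test_\infty(K)$ with parameter $n':=n+3$ to obtain some $M\in\N$. Pick $\chi\in\test(\R)$ equal to $1$ on $[-1,1]$ with $\supp\chi\subseteq[-2,2]$, and set $\chi_\delta(x):=\chi(x/\delta)$, $\rho_\delta:=1-\chi_\delta$. For $\phi\in\test(K)$ with $D^j\phi(0)=0$ for $j\le m$, decompose $\phi=\phi\rho_\delta+\phi\chi_\delta$. Since $\rho_\delta\equiv 0$ near $0$, $\phi\rho_\delta\in\test_\infty(K)$; Leibniz gives $p_M(\phi\rho_\delta)\le C_M\|\phi\|_{C^M}\delta^{-M}$, hence
\[
|T_\eps(\phi\rho_\delta)|\le C_M\|\phi\|_{C^M}\,\eps^{n+3}\delta^{-M}.
\]
For $\phi\chi_\delta$, Taylor's remainder and the vanishing conditions yield $|\phi(x)|\le C\|\phi\|_{C^{m+1}}|x|^{m+1}$ on $[-2\delta,2\delta]$; combined with the moderate bound $\sup_K|f_\eps|\le\eps^{-N_0}$ this gives
\[
|T_\eps(\phi\chi_\delta)|\le C\|\phi\|_{C^{m+1}}\,\eps^{-N_0}\delta^{m+2}.
\]

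Finally set $\delta:=\eps^{1/M}$, which turns the first estimate into $C_M\|\phi\|_{C^M}\eps^{n+2}$, and require $m\ge M(n+2+N_0)-2$, which turns the second into $\le C\|\phi\|_{C^{m+1}}\eps^{n+2}$. Thus $|T_\eps(\phi)|\le C(\phi)\eps^{n+2}\le\eps^n$ once $\eps\le C(\phi)^{-1/2}$, which is exactly the generalized-number inequality $\abs{\int_\R f\phi}\le\caninf^n$. The main obstacle is the auxiliary negligible-uniform-boundedness lemma (an extension of Theorem~\ref{thm_asympt_uniform_bd}(3) from $\Mod$ to $\Null$) and carefully balancing the exponents $n+3$, $1/M$, $m+2$, $N_0$ so that both summands are simultaneously controlled by $\eps^{n+2}$ without creating a circular dependence between $n'$ and $M=M(n')$.
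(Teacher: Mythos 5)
Your proposal is correct and follows essentially the paper's own strategy: a uniform-boundedness estimate (Theorem \ref{thm_asympt_uniform_bd}(3)/Theorem \ref{thm_(f_n)-asympt}) on the Fr\'echet space of test functions flat at $0$, combined with an $\eps$-dependent cutoff near the origin and a Taylor-flatness estimate, with the exponents balanced at the end. The only differences are cosmetic: the paper gets your auxiliary ``negligible'' uniform bound directly by applying Theorem \ref{thm_asympt_uniform_bd}(3) to the rescaled net $(\eps^{-n-1}\int f_\eps\,\cdot\,)_\eps$, and it cuts off at scale $\eps$ (using the flatness to compensate the cutoff derivatives via Leibniz) where you cut off at scale $\eps^{1/M}$ so that only the sup bound on $\phi$ is needed on the inner piece.
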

\begin{proof}
Let $K\csubset\R$ and let $R\in\R$ such that $\abs{x}\le R/2$, $\forall x\in K$. Let $L=\{x\in\R: \abs x\le R\}\csubset\R$. Let $n\in\N$. Let $E=\{\phi\in\test(L): D^j\phi(0)=0, \forall j\in\N\}$. $E$ is a Fr{\'e}chet space, as a closed subspace of the Fr{\'e}chet space $\test(L)$. In particular, we can apply theorem \ref{thm_asympt_uniform_bd}(3) to the space $E$ and the net $(\phi\mapsto \eps^{-n-1}\int_\R f_\eps \phi)_\eps\in E'^{(0,1]}$, which is asymptotically bounded in $E'$ (with the topology of pointwise convergence). Hence we find $M\in\N$ for which
\[(\exists\eps_0\in (0,1]) (\forall\eps\le\eps_0) (\forall \phi \in E)
\big(\abs[\Big]{\int_\R f_\eps \phi}\le M \eps^{n+1} \max_{k\le M} \sup_{x\in\R}\abs{D^k \phi(x)}\big).\]
Now let $\Psi\in\test(L)$ with $\Psi(x)=1$, for $\abs{x}\le R/2$. Let $\chi_{\R\setminus [-\eps,\eps]}$ be the characteristic function of $\R\setminus[-\eps,\eps]$ and let $\rho_\eps(x)=\eps^{-1}\rho(x/\eps)$, where $\rho\in\test([-1/2,1/2])$ with $\int_\R\rho=1$. Let $\psi_\eps=\Psi \cdot (\chi_{\R\setminus [-\eps,\eps]}\conv \rho_\eps)\in E$. Then the net $(\psi_\eps)_\eps$ represents $\psi\in\Gen(\R)$ with $\psi(\tilde x)=0$ for $\abs{\tilde x}\le \caninf/2$ and $\psi(\tilde x)=1$ for $2\caninf\le\abs{\tilde x}\le R/2$. Further, $\sup_{x\in\GenR} \abs{D^j\psi(x)}\le c_j\alpha^{-j}$, for some $c_j\in\R$. Since $f\in\Gen(\R)$, there exists $N\in\N$ such that $\sup_{\abs x\le R}\abs{f(x)}\le\caninf^{-N}$. Now let $\phi\in\test(K)$ with $D^j\phi(0)=0$, $\forall j\le m:=n + 1 + \max(M,N)$. Then by the Taylor expansion, there exists $C\in\R$ such that $\abs{D^j\phi(x)}\le C\abs{x}^{m-j}$, for each $x\in\R$ and $j\le m$. Further, also $\phi\psi_\eps\in E$, $\forall\eps$, so
\[
\abs[\Big]{\int_\R f \phi\psi}\le M \caninf^{n+1} \max_{k\le M} \sup_{x\in\GenR}\abs{D^k (\phi\psi)(x)}.
\]
Now for $k\le M$,
\begin{multline*}
\sup_{x\in\GenR}\abs{D^k (\phi(1-\psi))(x)}
\le 2^k \sup_{\abs{x}\le 2\alpha, j\le k}\abs{D^j\phi(x)D^{k-j}(1-\psi)(x)}\\
\le C' \max_{j\le k}\alpha^{m-j}\alpha^{-(k-j)}\le C',
\end{multline*}
for some $C'\in\R$. Further,
\[
\abs{\int_\R f \phi(1-\psi)}=\abs{\int_{-2\caninf}^{2\caninf} f \phi(1-\psi)}\le \caninf^{-N}\caninf^m\le \alpha^{n+1}.
\]
Hence
\[\abs{\int_\R f \phi}\le \alpha^{n+1} + M \caninf^{n+1} \max_{k\le M} \sup_{x\in\R}\abs{D^k \phi(x)} + M \caninf^{n+1} C'\le\alpha^n.\]
\end{proof}

\begin{thm}\label{thm_weak_hom_char_dim1}\leavevmode
\begin{enumerate}
\item Let $f\in\Gen(\R)$ be weakly homogeneous of degree $-m$, $m=1,2,3,\dots$. Then there exist $c_1,c_2\in\GenC$ such that for each $\phi\in\test(\R)$,
\[\int_\R f(x)\phi(x)\,dx = c_1 D^{m-1} \phi(0) + c_{2} \pairing{x^{-m}}{\phi}.\]
\item Let $f\in\Gen(\R)$ be weakly homogeneous of degree $\alpha \in\R$, $\alpha\ne -1,-2,-3,\dots$. Then there exist $c_1,c_2\in\GenC$ such that for each $\phi\in\test(\R)$,
\[\int_\R f(x)\phi(x)\,dx = c_{1}\pairing{x^\alpha_-}{\phi} + c_{2}\pairing{x^\alpha_+}{\phi},\]
where $x^\alpha_-, x^\alpha_+\in\test'(\R)$ are as in \cite[\S 2.4]{Estrada}. 
\end{enumerate}
\end{thm}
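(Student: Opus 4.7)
The plan is three-fold: (A) restrict $f$ to $\R\setminus\{0\}$ and invoke Theorem~\ref{thm_weak_homog_pierced} to extract candidate constants $c_1, c_2 \in \GenC$; (B) subtract the natural candidate to get a residual $g \in \Gen(\R)$ that is weakly homogeneous of degree $\alpha$ and weakly zero on $\test(\R\setminus\{0\})$; (C) use weak homogeneity together with an asymptotic expansion at $0$ to conclude that $g$ vanishes weakly on $\test(\R)$ in part~2, or is weakly a multiple of $D^{m-1}\iota(\delta)$ in part~1.

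For part~2 ($\alpha \neq -1, -2, \ldots$), Theorem~\ref{thm_weak_homog_pierced} gives $c_1, c_2 \in \GenC$ with the stated identity on $\test(\R\setminus\{0\})$. Since $\iota(x^\alpha_-)$ and $\iota(x^\alpha_+)$ are weakly homogeneous of degree $\alpha$ (the embedding $\iota$ preserves pairings and $x^\alpha_\pm$ are distributionally homogeneous), the residual $g := f - c_1 \iota(x^\alpha_-) - c_2 \iota(x^\alpha_+)$ is weakly homogeneous of degree $\alpha$ and annihilates $\test(\R\setminus\{0\})$. Fix $\phi \in \test(\R)$ and a cutoff $\eta \in \test(\R)$ with $\eta = 1$ near $0$, and set $\tilde g := g\eta \in \Gen_c(\R)$. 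For $\mu \in \R^+$ the decomposition $\phi(y/\mu) = \eta(y)\phi(y/\mu) + (1 - \eta(y))\phi(y/\mu)$ places the second piece in $\test(\R\setminus\{0\})$, where $g$ annihilates it; a uniform-boundedness argument in the spirit of Section~\ref{section_uniform_bdedness} upgrades this pointwise vanishing to equality in $\Gen(\R^+)$, and the corollary to Lemma~\ref{lemma_F} then gives
\[
H(\mu) := \int_\R \tilde g(y)\, \phi(y/\mu)\, dy \;=\; c\, \mu^{\alpha+1} \quad\text{in }\Gen(\R^+), \qquad c := \int_\R g\phi.
\]

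Taylor-expanding $\phi(y/\mu) = \sum_{j=0}^{N-1} \frac{D^j\phi(0)}{j!\, \mu^j} y^j + R_N(y, \mu)$ with $|R_N(y, \mu)| \le C_N |y/\mu|^N$ and substituting gives
\[
c\, \mu^{\alpha+1} = \sum_{j=0}^{N-1} A_j\, \mu^{-j} + r(\mu), \qquad A_j := \frac{D^j\phi(0)}{j!} \int_\R \tilde g(y)\, y^j\, dy \in \GenC,
\]
with a representative bound $|r_\eps(\mu)| \le C\, \mu^{-N}\, \eps^{-M}$ uniform for $\mu$ on compacts of $\R^+$ (where $M$ comes from $\tilde g \in \Gen_c(\R)$). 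Under the hypothesis $\alpha \notin \{-1, \ldots, -N\}$ the exponents $\alpha+1, 0, -1, \ldots, -(N-1)$ are pairwise distinct; evaluating at $N+1$ distinct points and inverting the scaled Vandermonde system expresses $c_\eps$ via the $H_\eps(\mu_i)$ and the remainders, and choosing $N$ with $N + \alpha + 1 > 0$ and running the argument on compacts $[\Lambda, 2\Lambda]$ with $\Lambda$ tuned to $\eps$ forces $c = 0$. For part~1 ($\alpha = -m$), the same analysis applies after an analogous Theorem~\ref{thm_weak_homog_pierced} step producing $c_1', c_2' \in \GenC$; now $\alpha + 1 = 1 - m$ coincides with $-j$ at $j = m - 1$, so the Vandermonde system determines only the combination $c - A_{m-1}$, which produces the $D^{m-1}\phi(0)$-coefficient $c_1 \in \GenC$ in the final formula, while $c_2 := c_2'$ gives the $\iota(x^{-m})$-coefficient. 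The secondary constraint $c_1' = (-1)^m c_2'$, needed so that $c_2'\, \iota(x^{-m})$ matches $f$ on $\R\setminus\{0\}$, falls out of the same analysis, since a mismatched ratio would force a $\log\mu$ defect in the expansion of $H$, incompatible with $H = c\, \mu^{1-m}$ in $\Gen(\R^+)$.

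The principal obstacle is the rigorous extraction step: $r(\mu)$ is negligible uniformly only on compact subsets of $\R^+$, whereas the Vandermonde analysis naturally calls for $\mu \to \infty$. The resolution is to combine the uniform-boundedness tools of Section~\ref{section_uniform_bdedness} with a careful, $\eps$-dependent choice of evaluation points $\mu_i(\eps)$, ensuring that the Vandermonde condition number and the $\eps_0$-threshold for negligibility stay compatible; the distinctness of exponents then pins $c$ (respectively $c - A_{m-1}$) down.
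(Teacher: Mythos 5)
Your steps (A)--(B) are sound, and the identity $H(\mu)=c\,\mu^{\alpha+1}$ for $\mu\in\GenR^+_c$ can indeed be justified with the Section~\ref{section_uniform_bdedness} tools (the cut-off piece $(1-\eta)\phi(\cdot/\mu)$ is handled by Theorem~\ref{thm_(f_n)-asympt} on $\test(\R\setminus\{0\})$). The genuine gap is the extraction step you yourself call the principal obstacle, and it is not a technicality that a clever choice of $\mu_i(\eps)$ can repair. The equality $H(\mu)=c\mu^{\alpha+1}$ is an equality in $\Gen(\R^+)$: on representatives it gives a negligible error only uniformly on \emph{fixed} compact subsets of $\R^+$, i.e.\ at compactly supported generalized points. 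On such a compact the Taylor-remainder bound $\abs{r_\eps(\mu)}\le C\mu^{-N}\eps^{-M}$ is merely $O(\eps^{-M})$, the same size as the moderate unknowns $c_\eps$, $A_{j,\eps}$, so the Vandermonde inversion only yields $\abs{c_\eps}\le C'\eps^{-M}$, useless when you must show $\abs{c_\eps}\le\eps^k$ for every $k$. To make $\mu^{-N}\eps^{-M}$ small against $c_\eps\mu^{\alpha+1}$ you must take $\mu_\eps$ of size a negative power of $\eps$; but there the relation $H(\mu)=c\mu^{\alpha+1}$ carries no information at all: a net negligible on every compact subset of $\R^+$ can be arbitrarily large at points $\mu_\eps\to\infty$, and nothing in Lemma~\ref{lemma_F}, its corollary, or \cite[Lemma~4.3]{homog} (all resting on point values at compactly supported points) extends the homogeneity relation beyond $\GenR^+_c$. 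The two estimates you need thus hold on disjoint ranges of $\mu$ and no tuning of evaluation points or condition numbers makes them overlap; the same unproved step underlies your part~1 claims that only $c-A_{m-1}$ is left undetermined and that a ``$\log\mu$ defect'' forces the compatibility $c_1'=(-1)^mc_2'$.

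There is also a structural reason the paper does not start, as you do, from Theorem~\ref{thm_weak_homog_pierced}: that theorem only yields a residual $g$ annihilating $\test(\R\setminus\{0\})$, and Example~\ref{ex_weak_point_support} shows this is genuinely weaker than annihilating all $\phi$ flat at $0$; in particular it does not reduce $\int_\R g\phi$ to jet data of $\phi$ at $0$. The paper first establishes the representation on flat test functions (for $\alpha\notin\Z$ via the Euler equation applied to $\phi'/\abs{x}^\alpha$, where flatness keeps the quotients smooth; for $\alpha=-m$ by multiplying by $x^m$ and using Lemma~\ref{lemma_princ_val} together with the $\ln\lambda\,D^{m-1}\delta$ anomaly of $\FinPart{H(x)/x^m}$), so that the residual annihilates flat test functions. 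Only then can one (i) run the scaling argument with the test functions $\phi(x)-\lambda^m\phi(x/\lambda)$, which are flat at $0$ but in general not supported away from $0$ (so your weaker hypothesis would not justify this step), and (ii) absorb the Taylor remainder via Proposition~\ref{prop_weakly_0_outside_origin}, a quantitative uniform-boundedness estimate in the Fr\'echet space of flat test functions on a fixed compact combined with a cut-off at scale $\caninf$; that proposition is exactly the rigorous substitute for your ``take $\mu$ large'' step, and it is unavailable under your hypothesis, since on $\test(L\setminus\{0\})$ one loses the fixed countable seminorm system required by Theorem~\ref{thm_asympt_uniform_bd}(3). To close your argument you would essentially have to redo this flat-test-function analysis; the Vandermonde scheme as proposed does not.
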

\begin{proof}
(1) Let $g(x)=x^mf(x)\in\Gen(\R)$. As in theorem \ref{thm_weak_homog_pierced},
\[\int_\R g'(x)x\phi(x)\,dx=0, \quad\forall\phi\in\test(\R).\]
Since $\{x\phi(x): \phi\in\test(\R)\}=\{\phi\in\test(\R): \phi(0)=0\}$ and by partial integration,
\[\int_\R g(x)\phi'(x)\,dx=0, \quad\forall\phi\in\test(\R)\text{ with }\phi(0)=0.\]
Since $\{\phi':\phi\in\test(\R)$ with $\phi(0)=0\}=\{\phi\in\test(\R): \int_{\R}\phi=\int_{\R^+}\phi=0\}$, we find as in theorem \ref{thm_weak_homog_pierced} $c_1,c_2\in\GenC$ for which
\[\int_\R f(x)x^{m}\phi(x)\,dx=c_1\int_{\R}\phi(x)\,dx + c_2\int_{\R^+}\phi(x)\,dx, \forall\phi\in\test(\R).\]
Let $\phi_0$ as in lemma \ref{lemma_princ_val}. Since for any $\phi\in\test(\R)$, $\psi_\phi:=\phi - \sum_{j=0}^{m-1}\frac{D^{j}\phi(0)}{j!}x^{j}\phi_0\in \{x^{m}\phi(x): \phi\in\test(\R)\}=\{\phi\in\test(\R): D^j\phi(0)=0, 0\le j < m\}$, there exist $a_j\in\GenC$ such that for any $\phi\in\test(\R)$,
\[\int_\R f(x)\phi(x)\,dx = \sum_{j=0}^{m-1} a_{j}D^{j}\phi(0) + c_1\int_\R\frac{\psi_\phi(x)}{x^m}\,dx + c_2\int_{\R^+}\frac{\psi_\phi(x)}{x^m}\,dx.\]
Hence by lemma \ref{lemma_princ_val}, there exist $a_j\in\GenC$ such that for any $\phi\in\test(\R)$,
\[
\int_\R f(x)\phi(x)\,dx = \sum_{j=0}^{m-1} a_{j}D^{j}\phi(0) + c_1\pairing{x^{-m}}{\phi} + c_2\pairing{\FinPart{\frac{H(x)}{x^m}}}{\phi}.
\]
Expressing the homogeneity of order $-m$ in this equation, we find that $c_2=0$ and $a_j=0$ for $j<m-1$, 
since \cite[\S 2.6]{Estrada} $x^{-m}$ is homogeneous of degree $-m$ and there exists $c\in\R\setminus\{0\}$ such that
\[\lambda^m\FinPart{\frac{H(\lambda x)}{(\lambda x)^m}}=\FinPart{\frac{H(x)}{x^m}}+ c\ln\lambda \,D^{m-1}\delta,\quad\forall\lambda\in\R^+.\]
(2) Case~1: $\alpha=m\in\N$. Notice that $\frac{f(x)}{x^m}$ is not necessarily in $\Gen(\R)$. Let $\phi\in\test(\R)$ with $D^j \phi(0)=0$ for $0\le j \le m$. Then $\frac{\phi(x)}{x^m}\in\test(\R)$, 
$\frac{\phi'(x)}{x^m}\in\test(\R)$ and $\frac{\phi(x)}{x^{m+1}}\in\test(\R)$, hence
\[0= \int_\R \left(f(x)\frac{\phi(x)}{x^m}\right)'dx =\int_\R f'(x)\frac{\phi(x)}{x^m}\,dx + \int_\R f(x)\frac{\phi'(x)}{x^m}\,dx - m \int_\R f(x)\frac{\phi(x)}{x^{m+1}}\,dx.\]
By theorem~\ref{thm_weak_Euler}, $\int_\R xf'(x)\frac{\phi(x)}{x^{m+1}}\,dx= m \int_\R f(x)\frac{\phi(x)}{x^{m+1}}\,dx$. Hence
\[\int_\R f(x)\frac{\phi'(x)}{x^m}\,dx=0,\quad\forall\phi\in\test(\R) \text{ with }D^j \phi(0)=0, 0\le j\le m.\]
Since $\{\phi':\phi\in\test(\R)$ with $D^j \phi(0)=0$, for $0\le j\le m\}=\{\phi\in\test(\R): \int_{\R}\phi=\int_{\R^+}\phi=\phi(0)=\cdots= D^{m-1} \phi(0)=0\}=\{\phi\in\test(\R): \int_{\R}\phi=\int_{\R^+}\phi=0$ and $\frac{\phi(x)}{x^m}\in\test(\R)\}$,
\[
\int_\R f(x)\phi(x)\,dx=0,\quad\forall\phi\in\test(\R)\text{ with }\int_\R x^m\phi(x)\,dx=\int_{\R^+}x^m\phi(x)\,dx=0.\]
As before, we find $c_1,c_2\in\GenC$ for which
\[
\int_\R f(x)\phi(x)\,dx= c_1\int_{\R}x^m\phi(x)\,dx + c_2\int_{\R^+}x^m\phi(x)\,dx,\quad\forall\phi\in\test(\R).
\]
Case~2: $\alpha\notin\Z$. If $\phi\in\test(\R)$ with $D^j\phi(0)$, $\forall j\in\N$, then $\frac{\phi(x)}{\abs{x}^\alpha}\in\test(\R)$, $\frac{\phi'(x)}{\abs{x}^\alpha}\in\test(\R)$ and $\frac{\phi(x)}{\abs{x}^{\alpha+1}}\in\test(\R)$. As before, since $(\abs x^{-\alpha})'=-\alpha x^{-1}\abs{x}^{-\alpha}$ for $x\ne 0$,
\[\int_\R f(x)\frac{\phi'(x)}{\abs{x}^\alpha}\,dx=0,\quad\forall\phi\in\test(\R) \text{ with }D^j \phi(0)=0, \forall j\in\N.\]
Let $\psi\in\test(\R)$ with $D^j \psi(0)=0, \forall j\in\N$ and $\int_{\R^-} \abs{x}^\alpha\psi=\int_{\R^+}\abs{x}^\alpha \psi=0$. Then $\phi(x):=\int_{-\infty}^x\abs{t}^\alpha\psi(t)\,dt\in\test(\R)$,
$D^j \phi(0)=0, \forall j\in\N$ and $\psi(x)=\phi'(x)/\abs{x}^\alpha$. Hence $\int_\R f\psi=0$, for such $\psi$.\\
As before, fixing $\phi_1\in\test(\R)$ with $D^j\phi_1(0)=0$, $\forall j\in\N$, $\int_{\R^-}\abs{x}^\alpha\phi_1=1$, $\int_{\R^+}\abs{x}^\alpha\phi_1=0$ and $\phi_2\in\test(\R)$ with $D^j\phi_2(0)=0$, $\forall j\in\N$, $\int_{\R-}\abs{x}^\alpha\phi_2=0$, $\int_{\R^+}\abs{x}^\alpha\phi_2=1$, we find $c_1$, $c_2\in\GenC$ such that for each $\phi\in\test(\R)\text{ with }D^j \phi(0)=0, \forall j\in\N$,
\[
\int_\R f \phi = c_1\int_{\R^-} \abs{x}^\alpha\phi(x)\,dx + c_2\int_{\R^+}\abs{x}^\alpha \phi(x)\,dx=c_1\pairing{x^\alpha_-}{\phi}+ c_2\pairing{x^\alpha_+}{\phi}.
\]
Let $\iota$ be an embedding of $\test'(\R)$ into $\Gen(\R)$ (which preserves the pairing). For $g=f-c_1\iota(x^\alpha_-)-c_2\iota(x^\alpha_+)$, we obtain $\int_\R g\phi=0$, $\forall \phi\in\test(\R)\text{ with }D^j \phi(0)=0, \forall j\in\N$. Further, since $x^\alpha_-$, $x^\alpha_+$ are homogeneous distributions of degree $\alpha$ \cite[\S 2.6]{Estrada}, also $g$ is weakly homogeneous of degree $\alpha$ \cite[Prop.~4.21]{homog}.\\
Now let $m\in\N$ and $\phi\in\test(\R)$ with $D^j\phi(0)=0$, for each $j\ne m$. Let $\lambda\in\R^+$. Then $D^j(\phi(x)-\lambda^m\phi(x/\lambda))(0)=0$, $\forall j\in\N$, so
\[
\int_\R g(x) \phi(x)\,dx = \int_\R g(x)\lambda^m\phi\left(\frac x \lambda\right)\,dx
=\lambda^{\alpha + m + 1}\int_\R g(x)\phi(x)\,dx.
\]
As $\alpha\ne -m -1$ and $\lambda\in\R^+$ arbitrary, $\int_\R g\phi = 0$.\\
Now let $\phi\in\test(\R)$ arbitrary. Let $\psi\in\test(\R)$ with $\psi(x)=1$, $\forall x\in\supp\phi$ and $\forall x$ in a neighbourhood of $0$. Then for each $m\in\N$, $\phi(x)=\sum_{j=0}^m \frac{D^j\phi(0)}{j!}x^j\psi(x) + x^m R_m(x)$,
where $R_m\in\test(\R)$ with $\supp R_m\subseteq \supp\psi$. Since $D^k\big(\frac{D^j\phi(0)}{j!}x^j\psi\big)(0)=0$, $\forall k\ne j$, we have $\int_\R g\phi=\int_\R g(x)x^m R_m(x)\,dx$. Let $n\in\N$. By proposition \ref{prop_weakly_0_outside_origin}, there exists $m\in\N$ such that $\abs{\int_\R g(x) x^m R_m(x)\,dx}\le\caninf^n$. Hence $\int_\R g\phi=0$.
\end{proof}

\section{Weak homogeneity in $\Gen(\R^d\setminus\{0\})$}
We show an analogon of the formula $f(x)= f\big(\frac{x}{\abs x}\big) \abs{x}^\alpha$, which holds for (strongly) homogeneous generalized functions of degree $\alpha$ on $\R^d\setminus\{0\}$ \cite{homog}. The following example shows that (similar to distribution theory), we cannot maintain the equation in unchanged form (even in the sense of generalized distributions).
\begin{example}
Let $\alpha\in\R$. There exists $f\in\Gen(\R^d\setminus\{0\})$ such that $\int_{\R^d}f\phi=0$, $\forall\phi\in\test(\R^d\setminus\{0\})$, but yet $\int_{\R^d}f\big(\frac{x}{\abs x}\big) \abs{x}^\alpha\phi(x)\,dx\ne 0$, for some $\phi\in\test(\R^d\setminus\{0\})$.
\end{example}
\begin{proof}
Let $\rho\in\Schwartz(\R)$ with $\rho(0)\ne 0$ and with $\int_\R t^m\rho(t)\,dt=0$, $\forall m\in\N$, $m\ge 1$ (for the existence of $\rho$, see e.g., \cite[\S 1.2.2]{GKOS}). Let $g_\eps(t)= \eps^{-1}\big[2\rho\big(\frac{2t-2}{\eps}\big) - \rho\big(\frac{t-1}{\eps}\big)\big]$. Then $g=[(g_\eps)_\eps]\in\Gen(\R)$. Let $f(x)=g(\abs{x})\in\Gen(\R^d\setminus\{0\})$.
For $\psi\in\test(\R^+\times S^{d-1})$,
\[
\sup_{\omega\in S^{d-1}}\abs{\int_\R g_\eps(r)\psi(r,\omega)\,dr} = \sup_{\omega\in S^{d-1}}\abs{\int_\R [2\rho(2r)-\rho(r)]\psi(1 + \eps r,\omega)\,dr}.
\]
Since for any $m\in\N$, $\psi(1 + \eps r,\omega)=\psi(1,\omega) + (\eps r)\partial_r \psi(1,\omega) +\cdots + \frac{(\eps r)^m}{m!}\partial_r^m \psi(1+\eps r', \omega)$, the moment conditions imply that
\[
\sup_{\omega\in S^{d-1}}\abs{\int_\R g_\eps(r)\psi(r,\omega)\,dr}
\le \frac{\eps^m}{m!}
\sup_{\omega\in S^{d-1},r\in\R} \abs{\partial_r^m\psi(r,\omega)}
\int_{\R}r^m(2\abs{\rho(2r)}+\abs{\rho(r)})\,dr.
\]
Now let $\phi\in\test(\R^d\setminus\{0\})$. With $\psi(r,\omega):=r^{d-1}\phi(r\omega)$, if $r\ge 0$, and $\psi(r,\omega)=0$, if $r\le 0$, we obtain $\psi\in\test(\R^+\times S^{d-1})$. Hence
\begin{multline*}
\abs{\int_{\R^d} f_\eps\phi}=\abs{\int_{S^{d-1}} d\omega \int_0^\infty r^{d-1} g_\eps(r)\phi(r\omega)\,dr}\\
\le \mathrm{vol}(S^{d-1})\sup_{\omega\in S^{d-1}}\abs{\int_\R g_\eps(r)\psi(r,\omega)\,dr}\le \eps^m,
\end{multline*}
for small $\eps$. As $m\in\N$ arbitrary, $\int_{\R^d} f\phi=0$, $\forall\phi\in\test(\R^d\setminus\{0\})$.
Yet
\[\int_{\R^d} f\left(\frac{x}{\abs{x}}\right)\abs{x}^\alpha\phi(x)\,dx = g(1)\int_{\R^d}\abs{x}^\alpha \phi(x)\,dx\ne 0\]
for some $\phi\in\test(\R^d\setminus\{0\})$, since $g(1)=[(\eps^{-1})_\eps]\rho(0)\ne 0$.
\end{proof}
Hence $\int_{\R^d} f\big(\frac{x}{\abs{x}})\abs{x}^\alpha\phi(x)\,dx$ is not stable when we add to $f$ a function $g$ with $\int_{\R^d}g\phi=0$, $\forall\phi\in\test(\R^d\setminus\{0\})$.  
Instead of the values $f(\omega)$ ($\omega\in S^{d-1}$), the property that $\frac{f(r\omega)}{r^\alpha}$ is constant in $r$ for a homogeneous function $f$ of degree $\alpha$, suggests that for a fixed $u_0\in\test(\R^+)$ with $\int_\R u_0=1$, $g(\omega):=\int_{\R^+} \frac{f(r\omega)}{r^\alpha}u_0(r)\,dr$ might be a better candidate. As in lemma \ref{lemma_F}, one easily sees that $g\in\Gen(\R^d\setminus\{0\})$, if $f\in\Gen(\R^d\setminus\{0\})$.
We first show that the choice of $u_0$ is irrelevant.

\begin{lemma}\label{lemma_weak_Euler_polar_coord}
If $f\in\Gen(\R^d\setminus\{0\})$ is weakly homogeneous of degree $\alpha$, then
\[\int_{\R^d} \frac{f(x)}{\abs{x}^{\alpha+d-1}}u'(\abs x)v\left(\frac{x}{\abs x}\right)\,dx=0, \quad\forall u\in\Gen_c^\infty(\R^+),\quad \forall v\in\Gen_c^\infty(\R^d).\]
\end{lemma}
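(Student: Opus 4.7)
The plan is to deduce the claimed identity from the Euler characterization provided by Theorem~\ref{thm_weak_Euler}. Since $f$ is weakly homogeneous of degree $\alpha$, that theorem gives
\[
\int_{\R^d} \sum_{i=1}^d x_i \partial_i f(x)\,\phi(x)\,dx = \alpha \int_{\R^d} f(x)\phi(x)\,dx
\qquad\forall\phi\in\Gen_c^\infty(\R^d\setminus\{0\}).
\]
Applying integration by parts at the level of representatives (legitimate because $\phi$ has uniform compact support in $\R^d\setminus\{0\}$ and all $f_\eps,\phi_\eps$ are smooth), the left-hand side equals $-\int f\sum_i\partial_i(x_i\phi)\,dx=-\int f\bigl(d\phi+\sum_i x_i\partial_i\phi\bigr)\,dx$, and the Euler equation rearranges to
\[
\int_{\R^d} f(x)\Bigl[\sum_{i=1}^d x_i\partial_i\phi(x)+(\alpha+d)\phi(x)\Bigr]\,dx = 0
\qquad\forall\phi\in\Gen_c^\infty(\R^d\setminus\{0\}).
\]

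Next I specialize the test function. For $g\in\Gen_c^\infty(\R^d\setminus\{0\})$, set $\phi(x)=g(x)/\abs{x}^{\alpha+d}$; this lies in $\Gen_c^\infty(\R^d\setminus\{0\})$ because the support of $g$ is a compact subset of $\R^d\setminus\{0\}$ on which $\abs{x}^{-\alpha-d}$ and all its derivatives are uniformly bounded. A direct calculation using $\sum_i x_i\partial_i\abs{x}^{-\alpha-d}=-(\alpha+d)\abs{x}^{-\alpha-d}$ gives $\sum_i x_i\partial_i\phi+(\alpha+d)\phi=\abs{x}^{-\alpha-d}\sum_i x_i\partial_i g$, and consequently
\[
\int_{\R^d} \frac{f(x)}{\abs{x}^{\alpha+d}}\sum_{i=1}^d x_i\partial_i g(x)\,dx = 0
\qquad\forall g\in\Gen_c^\infty(\R^d\setminus\{0\}).
\]

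Finally, I choose $g(x)=u(\abs{x})\,v(x/\abs{x})$ with $u\in\Gen_c^\infty(\R^+)$ and $v\in\Gen_c^\infty(\R^d)$. The compact support of $u$ confines $g$ to a compact annular shell bounded away from $0$, and the chain rule together with the smoothness of $x\mapsto\abs{x}$ and $x\mapsto x/\abs{x}$ on that shell shows that $g\in\Gen_c^\infty(\R^d\setminus\{0\})$. Because $v(x/\abs{x})$ is homogeneous of degree $0$ in $x$, one has $\sum_i x_i\partial_i\bigl(v(x/\abs{x})\bigr)=0$, while $\sum_i x_i\partial_i u(\abs{x})=\abs{x}u'(\abs{x})$; hence $\sum_i x_i\partial_i g(x)=\abs{x}u'(\abs{x})v(x/\abs{x})$. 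Substituting into the preceding identity yields exactly the assertion.

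The only step that needs a moment's care is verifying that $g=u(\abs{\cdot})v(\cdot/\abs{\cdot})$ belongs to $\Gen_c^\infty(\R^d\setminus\{0\})$, but this is routine since the cut-off coming from $u$ keeps all chain-rule factors uniformly bounded on the relevant shell; the rest of the argument is differential calculus combined with the Euler equation already at our disposal.
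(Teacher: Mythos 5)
Your argument is correct and is essentially the paper's own proof: both rest on Theorem~\ref{thm_weak_Euler} applied, after one integration by parts, to the test function $u(\abs{x})\,\abs{x}^{-\alpha-d}\,v(x/\abs{x})\in\Gen_c^\infty(\R^d\setminus\{0\})$. The only difference is organizational --- the paper performs the partial integration radially in polar coordinates, whereas you do it in Cartesian form and then specialize $g(x)=u(\abs{x})\,v(x/\abs{x})$, which amounts to the same computation.
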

\begin{proof}
Let $u\in\Gen_c^\infty(\R^+)$. By partial integration,
\[
\int_{\R^+} \frac{f(r\omega)}{r^\alpha}u'(r)\,dr
=\int_{\R^+}\left(\alpha \frac{f(r\omega)}{r^{\alpha+1}} - \frac{\nabla f(r\omega)\cdot \omega}{r^\alpha}\right)u(r)\,dr.
\]
Hence by theorem \ref{thm_weak_Euler}, for $u\in\Gen_c^\infty(\R^+)$ and $v\in\Gen_c^\infty(\R^d)$,
\begin{multline*}
\int_{\R^d} \frac{f(x)}{\abs{x}^{\alpha+d-1}}u'(\abs x)v\left(\frac{x}{\abs x}\right)\,dx
= \int_{S^{d-1}} v(\omega) \Big(\int_{\R^+} \frac{f(r\omega)}{r^\alpha}u'(r)\,dr\Big)\,d\omega\\
= \int_{\R^d}\big(\alpha f(x) - \nabla f(x)\cdot x\big) \underbrace{\frac{u(\abs x)}{\abs{x}^{\alpha + d}}\, v\left(\frac{x}{\abs x}\right)}_{\in\Gen_c^\infty(\R^d\setminus\{0\})}\,dx =0.
\end{multline*}
\end{proof}

\begin{prop}\label{prop_indep_of_u_0}
Let $f\in\Gen(\R^d\setminus\{0\})$ be weakly homogeneous of degree $\alpha$. Let $u_1, u_2\in\test(\R^+)$ with $\int_{\R} u_1=\int_{\R} u_2=1$. Let $g_j(\omega)=\int_{\R^+} \frac{f(r\omega)}{r^\alpha}u_j(r)\,dr$, for $\omega\in S^{d-1}$ ($j=1,2$). Then
\[\int_{\R^d} g_1\left(\frac{x}{\abs{x}}\right)\abs{x}^\alpha \phi(x)\,dx
=\int_{\R^d} g_2\left(\frac{x}{\abs{x}}\right)\abs{x}^\alpha \phi(x)\,dx, \quad \forall\phi\in\test(\R^{d}\setminus\{0\}).\]
\end{prop}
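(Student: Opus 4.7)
My plan is to reduce the claim to Lemma \ref{lemma_weak_Euler_polar_coord} by recognising, after integration against an angular test function, that $g_1 - g_2$ corresponds to the action of a radial derivative on $f$. I first set $u := u_1 - u_2 \in \test(\R^+)$. Because $\int_\R u = 0$ and $\supp u$ is a compact subset of $\R^+$, the antiderivative $U(r) := \int_{-\infty}^r u(t)\,dt$ again lies in $\test(\R^+)$: it vanishes for $r < \inf \supp u$ and, using $\int u = 0$, also for $r > \sup \supp u$. Crucially $U' = u$, and $U \in \test(\R^+) \subseteq \Gen_c^\infty(\R^+)$.

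Next, for a given $\phi \in \test(\R^d \setminus \{0\})$, I would introduce the angular integral
\[\Phi(\omega) := \int_0^\infty s^{\alpha + d - 1}\phi(s\omega)\,ds, \quad \omega \in S^{d-1},\]
which is smooth on $S^{d-1}$ (since $\phi$ vanishes in a neighbourhood of $0$, the integrand is genuinely compactly supported in $s$). Picking any $\chi \in \test(\R^+)$ with $\chi(1) = 1$, I set $\tilde\Phi(x) := \chi(\abs x)\,\Phi(x/\abs x)$; since $\chi$ vanishes near $0$ and at infinity, $\tilde\Phi$ genuinely belongs to $\test(\R^d) \subseteq \Gen_c^\infty(\R^d)$, and $\tilde\Phi(x/\abs x) = \Phi(x/\abs x)$ for every $x \ne 0$.

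Applying Lemma \ref{lemma_weak_Euler_polar_coord} with this $U$ and $\tilde\Phi$ yields
\[0 = \int_{\R^d} \frac{f(x)}{\abs x^{\alpha + d - 1}}\,U'(\abs x)\,\tilde\Phi\!\left(\frac{x}{\abs x}\right) dx.\]
Passing to polar coordinates $x = r\omega$ and using $U' = u$ together with $\tilde\Phi|_{S^{d-1}} = \Phi$, this becomes $\int_{S^{d-1}}\Phi(\omega)\,(g_1 - g_2)(\omega)\,d\omega$. The same polar-coordinate substitution applied directly to the claimed identity reduces it to
\[\int_{\R^d}(g_1 - g_2)\!\left(\frac{x}{\abs x}\right)\abs x^\alpha\phi(x)\,dx = \int_{S^{d-1}}(g_1 - g_2)(\omega)\,\Phi(\omega)\,d\omega,\]
and the claim follows. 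The only mildly delicate point will be ensuring that $\tilde\Phi$ actually lies in $\test(\R^d)$ (not merely in $\test(\R^d \setminus \{0\})$) so that it fits the hypothesis $v \in \Gen_c^\infty(\R^d)$ of Lemma \ref{lemma_weak_Euler_polar_coord}; this is what forces the radial cutoff $\chi$ to vanish near $0$. All remaining steps are Fubini and the polar change of variables, both valid componentwise on representatives in the Colombeau framework.
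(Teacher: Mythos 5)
Your proof is correct, and it rests on the same two ingredients as the paper's argument: writing $u_1-u_2$ as $U'$ with $U\in\test(\R^+)$ (possible precisely because $\int_\R(u_1-u_2)=0$) and then invoking Lemma \ref{lemma_weak_Euler_polar_coord}. Where you genuinely deviate is in how the radial integration is dispatched. The paper first proves $\int_{S^{d-1}}(g_2-g_1)\psi\,d\omega=0$ for every $\psi\in\Gen_c^\infty(\R^d)$ and then, writing the target integral in polar coordinates, must view the inner angular integral as a generalized function of $r$ and show that it vanishes at every $r\in\GenR^+_c$ — this is exactly why the family $\psi_r(x)=r^{\alpha+d-1}\phi(rx)$ with \emph{generalized} $r$, and hence the $\Gen_c^\infty$ setting, is needed — before integrating in $r$ via the point-value characterization. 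You instead apply Fubini first, componentwise on representatives, which collapses the radial integral into the single classical smooth function $\Phi(\omega)=\int_0^\infty s^{\alpha+d-1}\phi(s\omega)\,ds$ on $S^{d-1}$; after that, one application of the lemma with the purely classical data $U\in\test(\R^+)$ and $\tilde\Phi(x)=\chi(\abs{x})\Phi(x/\abs{x})\in\test(\R^d)$ suffices. This ordering avoids the generalized-point-value step and any test objects depending on generalized parameters, and the Fubini and polar-coordinate interchanges are indeed unproblematic, since for each $\eps$ the integrands are smooth and supported in a compact subset of $(0,\infty)\times S^{d-1}$ (both $\phi$ and $u_1-u_2$ are compactly supported away from the origin). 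The only thing you give up relative to the paper is the slightly stronger intermediate fact that $\int_{S^{d-1}}(g_2-g_1)\psi\,d\omega=0$ for all $\psi\in\Gen_c^\infty(\R^d)$, which is not used elsewhere; and your radial cutoff $\chi$ is exactly the right device to make $v$ globally defined on $\R^d$, as the hypothesis of the lemma requires.
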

\begin{proof}
For $\psi\in\Gen_c^\infty(\R^d)$,
\begin{multline*}
\int_{S^{d-1}}(g_2(\omega)-g_1(\omega))\psi(\omega)\,d\omega
=\int_{S^{d-1}}\int_{\R^+} \frac{f(r\omega)}{r^\alpha}(u_2(r)-u_1(r))\psi(\omega)\, d\omega dr\\
=\int_{\R^d} \frac{f(x)}{\abs{x}^{\alpha+d-1}}\big(u_2(\abs x)-u_1(\abs x)\big)\psi\left(\frac{x}{\abs x}\right)\,dx=0,
\end{multline*}
by lemma \ref{lemma_weak_Euler_polar_coord} and by the fact that $\{u': u\in\test(\R^+)\}=\{u\in\test(\R^+): \int_{\R}u=0\}$.\\
Thus, for $\phi\in\test(\R^{d}\setminus\{0\})$, there exists $N\in\N$ such that
\begin{multline*}
\int_{\R^d} \left(g_2\left(\frac{x}{\abs{x}}\right) - g_1\left(\frac{x}{\abs{x}}\right)\right) \abs{x}^\alpha \phi(x)\,dx\\
= \int_{\frac 1 N}^N\left(\int_{S^{d-1}} \big(g_2(\omega)-g_1(\omega)) r^{\alpha + d -1}\phi(r\omega) \,d\omega\right)dr = 0,
\end{multline*}
since $\psi(x):=r^{\alpha + d - 1}\phi(rx)\in\Gen_c^\infty(\R^d)$, $\forall r\in\GenR_c^+$.
\end{proof}

\begin{thm}\label{thm_weak_hom_char_dimd}
Let $f\in\Gen(\R^d\setminus\{0\})$ be weakly homogeneous of degree $\alpha$. Let $u_0\in\test(\R^+)$ with $\int_{\R} u_0=1$. Let $g(\omega)=\int_{\R^+} \frac{f(r\omega)}{r^\alpha}u_0(r)\,dr$, for $\omega\in S^{d-1}$. Then
\[\int_{\R^d} f(x)\phi(x)\,dx = \int_{\R^d} g\left(\frac{x}{\abs{x}}\right)\abs{x}^\alpha \phi(x)\,dx,\quad\forall\phi\in\test(\R^d\setminus\{0\}).
\]
In particular, $f$ is equal in the sense of generalized distributions to a (strongly) homogeneous generalized function of degree $\alpha$ in $\Gen(\R^d\setminus\{0\})$.
\end{thm}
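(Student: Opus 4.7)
The plan is to show that the residual $h(x) := f(x) - g(x/\abs{x})\abs{x}^\alpha$ vanishes in the sense of generalized distributions on $\R^d\setminus\{0\}$. Note first that $G(x) := g(x/\abs{x})\abs{x}^\alpha \in \Gen(\R^d\setminus\{0\})$ is strongly homogeneous of degree $\alpha$, since $G(\lambda x) = g(\lambda x/\abs{\lambda x})\abs{\lambda x}^\alpha = \lambda^\alpha G(x)$ for each $\lambda \in \R^+$; hence the ``in particular'' clause follows once $\int_{\R^d} h \phi = 0$ for all $\phi \in \test(\R^d\setminus\{0\})$ has been established.

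By Proposition \ref{prop_weak_equal_polar_test_functions}, it suffices to verify that
\[\int_{\R^d} h(x)\,u(\abs{x})\,v(x/\abs{x})\,dx = 0\]
for every $u \in \test(\R^+)$ and $v \in \test(\R^d)$. Passing to polar coordinates and inserting the definition of $g$, this integral takes the form
\[\int_{S^{d-1}} v(\omega)\int_0^\infty \frac{f(r\omega)}{r^\alpha}\bigl[w(r) - W u_0(r)\bigr]\,dr\,d\omega,\]
where $w(r) := u(r)\,r^{\alpha+d-1} \in \test(\R^+)$ and $W := \int_0^\infty w(r)\,dr$.

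The key observation is that the normalization $\int_\R u_0 = 1$ yields $\int_0^\infty [w - W u_0]\,dr = 0$, while $w$ and $u_0$ both vanish near $0$ and for large $r$; consequently the antiderivative
\[\tilde w(r) := \int_0^r \bigl[w(s) - W u_0(s)\bigr]\,ds\]
itself belongs to $\test(\R^+)$. Writing $w - W u_0 = \tilde w'$ and converting back to Cartesian coordinates, the displayed double integral equals
\[\int_{\R^d} \frac{f(x)}{\abs{x}^{\alpha+d-1}}\,\tilde w'(\abs{x})\,v(x/\abs{x})\,dx,\]
which vanishes by Lemma \ref{lemma_weak_Euler_polar_coord}.

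The main technical point is the construction of $\tilde w \in \test(\R^+)$; this is precisely where the hypothesis $\int_\R u_0 = 1$ is used, to ensure that $w$ and $W u_0$ have equal integrals and hence that $\tilde w$ is compactly supported away from the origin. Apart from that, the argument is essentially polar-coordinate bookkeeping combined with the two preparatory results (Proposition \ref{prop_weak_equal_polar_test_functions} and Lemma \ref{lemma_weak_Euler_polar_coord}).
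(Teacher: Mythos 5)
Your proof is correct, and it reaches the conclusion by a slightly different organization than the paper. The paper first proves Proposition \ref{prop_indep_of_u_0} (independence of the choice of $u_0$), then in the main proof distinguishes two cases for a product test function $\phi(x)=u(\abs x)v(x/\abs x)$: if $\int u(r)r^{\alpha+d-1}\,dr\ne 0$ it exploits the freedom in $u_0$ by choosing $u_0(r)$ proportional to $u(r)r^{\alpha+d-1}$, so that the desired identity appears immediately; if the integral vanishes, it notes that $u(r)r^{\alpha+d-1}$ is the derivative of a test function and applies Lemma \ref{lemma_weak_Euler_polar_coord} to see that both sides are $0$. You instead subtract $g(x/\abs x)\abs x^\alpha$ from $f$ at the outset and show in one stroke that the difference pairs to zero against $u(\abs x)v(x/\abs x)$, by observing that $w-Wu_0$ (with $w(r)=u(r)r^{\alpha+d-1}$, $W=\int w$) has zero integral, hence is $\tilde w'$ for some $\tilde w\in\test(\R^+)$, and invoking Lemma \ref{lemma_weak_Euler_polar_coord} directly. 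This absorbs the content of Proposition \ref{prop_indep_of_u_0} and removes the case distinction, at the price of nothing: the ingredients (Proposition \ref{prop_weak_equal_polar_test_functions}, Lemma \ref{lemma_weak_Euler_polar_coord}, and the fact that $\{u':u\in\test(\R^+)\}=\{u\in\test(\R^+):\int u=0\}$) are exactly those the paper relies on, so your route is a clean streamlining rather than a new method. Two small points you state without argument, both at the same level of detail as the paper: that $g(x/\abs x)\abs x^\alpha$ indeed lies in $\Gen(\R^d\setminus\{0\})$ (this follows as in Lemma \ref{lemma_F}, as the paper remarks before Lemma \ref{lemma_weak_Euler_polar_coord}), and the interchange of the radial and spherical integrations, which is harmless since everything is done on representatives with compactly supported integrands.
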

\begin{proof}
Let $u\in\test(\R^+)$ and $v\in\test(\R^d)$ and let $\phi(x)=u(\abs x)v\big(\frac{x}{\abs x}\big)$. Then
\begin{multline*}
\int_{\R^d} g\left(\frac{x}{\abs{x}}\right)\abs{x}^\alpha \phi(x) \,dx
= \int_{S^{d-1}}g(\omega) v(\omega)\,d\omega \int_{\R^+} u(r) r^{\alpha + d - 1}\, dr\\
= \int_{\R^d} \frac{f(x)}{\abs{x}^{\alpha+d-1}}u_0(\abs x)v\left(\frac{x}{\abs x}\right)\,dx \int_{\R^+} u(r) r^{\alpha + d - 1}\, dr,
\end{multline*}
where, by proposition \ref{prop_indep_of_u_0}, $u_0\in\test(\R^+)$ with $\int_\R u_0=1$ arbitrary. Suppose now that $\int_\R u(r) r^{\alpha + d - 1} \,dr\ne 0$. Then we can choose \[u_0(r) = \big(\int_\R u(r) r^{\alpha + d - 1} \,dr\big)^{-1} u(r) r^{\alpha + d - 1},\] and $\int_{\R^d} g\big(\frac{x}{\abs{x}}\big)\abs{x}^\alpha \phi(x) \,dx= \int_{\R^d} f\phi$ as required.\\
If $\int_\R u(r) r^{\alpha + d - 1} \,dr = 0$, then $w(r)= u(r) r^{\alpha + d-1}\in\{u': u\in\test(\R^+)\}$, hence by lemma \ref{lemma_weak_Euler_polar_coord}, $\int_{\R^d}f\phi=0$.
So also in this case, equality holds. For arbitrary $\phi\in\test(\R^d\setminus\{0\})$, the result follows by proposition \ref{prop_weak_equal_polar_test_functions}, since $g\big(\frac{x}{\abs x}\big)\abs{x}^\alpha\in\Gen(\R^d\setminus\{0\})$.
\end{proof}

We recall \cite[1.2.24]{GKOS} that
\begin{multline*}
{\mathcal E}_{\tau}(\R^d)=\big\{(u_\eps)_\eps\in\Cnt[\infty](\R^d)^{(0,1]}: (\forall\beta\in\N^d)(\exists N\in\N)\\
\big(\sup_{x\in\R^d}\abs[\big]{\partial^\beta u_\eps(x)}(1 + \abs x)^{-N}\le\eps^{-N},\text{ for small }\eps\big)\big\}.
\end{multline*}

The following corollary is somewhat similar to \cite[Conj.\ 4.24]{homog}.
\begin{cor}
Let $f\in\Gen(\R^d)$ such that $\restr{f}{\R^d\setminus\{0\}}$ is weakly homogeneous of degree $\alpha$. Then $f$ is equal in the sense of generalized distributions to some $h\in\Gen(\R^d)$ that admits a representative $(h_\eps)_\eps\in{\mathcal E}_{\tau}(\R^d)$.
\end{cor}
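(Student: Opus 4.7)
The plan is to apply Theorem \ref{thm_weak_hom_char_dimd} to $\restr{f}{\R^d\setminus\{0\}}$ to obtain a representative that is tempered away from the origin, and then to glue it with $f$ itself in a fixed neighbourhood of the origin by means of a cutoff at a \emph{fixed} (not $\eps$-depending) spatial scale $R$. The key observation is that a fixed-scale cutoff keeps the auxiliary test function in the weak-equality estimate a \emph{single} element of $\test(\R^d\setminus\{0\})$, so that weak equality follows at once from Theorem \ref{thm_weak_hom_char_dimd}, with no uniform-boundedness argument needed.

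Concretely, Theorem \ref{thm_weak_hom_char_dimd} yields $g\in\Gen(\R^d\setminus\{0\})$ such that $\tilde h(x):=g(x/\abs{x})\abs{x}^\alpha$ equals $\restr{f}{\R^d\setminus\{0\}}$ in the sense of generalized distributions on $\R^d\setminus\{0\}$. Fix representatives $(f_\eps)_\eps$ of $f$ and $(g_\eps)_\eps$ of $g$, and set $\tilde h_\eps(x):=g_\eps(x/\abs{x})\abs{x}^\alpha$ for $x\ne 0$. Fix $R>0$ and $\chi\in\Cnt[\infty](\R^d)$ with $\chi\equiv 0$ on $\{\abs{x}\le R\}$ and $\chi\equiv 1$ on $\{\abs{x}\ge 2R\}$, and define
\[
h_\eps(x):=(1-\chi(x))f_\eps(x)+\chi(x)\tilde h_\eps(x),
\]
where the second summand is read as $0$ wherever $\chi(x)=0$. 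Since $\chi$ vanishes in a neighbourhood of $0$, $h_\eps\in\Cnt[\infty](\R^d)$. On the compact region $\{\abs{x}\le 2R\}$ the moderate bounds on $(f_\eps)_\eps$ and on the smooth factor $\chi\,\tilde h_\eps$ imply trivially that $\sup_{\abs{x}\le 2R}\abs{\partial^\beta h_\eps(x)}\le\eps^{-N_\beta}$, which is tempered there. On $\{\abs{x}\ge 2R\}$ one has $h_\eps=\tilde h_\eps$, and the chain rule combined with the moderate estimate $\sup_{S^{d-1}}\abs{\partial^\gamma g_\eps}\le\eps^{-N'_\gamma}$ (valid since $S^{d-1}\csubset\R^d\setminus\{0\}$) and $\abs{\partial^\gamma\abs{x}^\alpha}\le C\abs{x}^{\alpha-\abs\gamma}$ yields $\abs{\partial^\beta h_\eps(x)}\le C_\beta\,\eps^{-N''_\beta}\abs{x}^{\alpha-\abs\beta}\le C_\beta\,\eps^{-N''_\beta}(1+\abs{x})^{\abs\alpha}$. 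Taking $N$ large enough then gives $\sup_{x\in\R^d}\abs{\partial^\beta h_\eps(x)}(1+\abs{x})^{-N}\le\eps^{-N}$, so $(h_\eps)_\eps\in{\mathcal E}_\tau(\R^d)$.

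For the weak equality, a direct calculation gives $h_\eps-f_\eps=\chi\cdot(\tilde h_\eps-f_\eps)$. For every $\phi\in\test(\R^d)$, the product $\chi\phi$ is supported in $\{\abs{x}\ge R\}\cap\supp\phi$ and is therefore a fixed (i.e.\ $\eps$-independent) element of $\test(\R^d\setminus\{0\})$. Hence
\[
\int_{\R^d}(h_\eps-f_\eps)\phi\,dx=\int_{\R^d}(\tilde h_\eps-f_\eps)(\chi\phi)\,dx,
\]
and the right-hand side is a negligible net, by the weak equality of $\tilde h$ and $\restr{f}{\R^d\setminus\{0\}}$ furnished by Theorem \ref{thm_weak_hom_char_dimd}. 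Thus $h=f$ in the sense of generalized distributions. The only really computational step is the tempered estimate on $\{\abs{x}\ge 2R\}$; the substantive idea is that the cutoff may be chosen independent of $\eps$. Had one instead been forced to let the cutoff scale shrink with $\eps$, the seminorms of $\chi_\eps\phi$ would blow up and an argument in the spirit of Theorem \ref{thm_asympt_uniform_bd}(3) would be unavoidable.
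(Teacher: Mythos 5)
Your proof is correct and follows essentially the same route as the paper: glue $f$ near the origin to the radial-mean function $g(x/\abs{x})\abs{x}^\alpha$ from Theorem \ref{thm_weak_hom_char_dimd} via a fixed, $\eps$-independent cutoff, so that weak equality reduces to testing against a single element of $\test(\R^d\setminus\{0\})$, and verify the ${\mathcal E}_\tau$ bounds directly. The only differences are cosmetic (your $\chi$ is the paper's $1-\chi$, and you spell out the tempered estimates the paper leaves as ``easily checked'').
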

\begin{proof}
Let $f=[(f_\eps)_\eps]$ and let $g_\eps(x)=\int_{\R^+}\frac{f_\eps(rx)}{r^\alpha}u_0(r)\,dr$, with $u_0\in\test(\R^+)$, $\int_\R u_0=1$. Let $\chi\in\test(\R^d)$ with $\chi(x)=1$, if $\abs{x}\le 1$. Let $h_\eps(x)=f_\eps(x)\chi(x) + g_\eps\big(\frac{x}{\abs x}\big)\abs{x}^\alpha(1-\chi(x))$. It is easily checked that $(h_\eps)_\eps\in{\mathcal E}_{\tau}(\R^d)$. By the previous theorem, for each $\phi\in\test(\R^d)$,
\[\int_{\R^d} h\phi = \int_{\R^d}f\chi\phi + \int_{\R^d} g\left(\frac{x}{\abs x}\right)\abs{x}^\alpha \underbrace{(1-\chi(x))\phi(x)}_{\in\test(\R^d\setminus\{0\})}\,dx= \int_{\R^d} f\phi.\]
\end{proof}

The following example shows that under the assumptions of the previous corollary, $f$ need not have a representative in ${\mathcal E}_\tau(\R^d)$.
\begin{example}
There exists $f\in\Gen(\R)$ which is equal to $0$ in the sense of generalized distributions, which does not have a representative $(f_\eps)_\eps\in{\mathcal E}_\tau(\R)$.
\end{example}
\begin{proof}
Let $f\in\Gen(\R)$ with representative $(f_\eps(x))_\eps= (e^{ix/\eps}\eps^{-x} e^{x^2})_\eps$. Then for each $\phi\in\test(\R)$, there exists $N\in\N$ such that $\supp\phi\subseteq [-N,N]$, hence
\begin{multline*}
\abs[\bigg]{\integr_{\R} f_\eps\phi} =\abs[\Bigg]{\integr_{-N}^N e^{\frac{i-\eps\ln\eps}{\eps} x}e^{x^2}\phi(x)\,dx}
= \frac{\eps^m}{\abs{i-\eps\ln\eps}^m} \abs[\Bigg]{\integr_{-N}^N e^{\frac{i-\eps\ln\eps}{\eps} x} D^m(e^{x^2}\phi(x))\,dx}\\
\le \eps^m \eps^{-N} \sup_{x\in\R}\abs{D^m(e^{x^2}\phi(x))} = O(\eps^{m-N}),
\end{multline*}
for each $m\in\N$. Hence $f=0$ in the sense of generalized distributions. Further, for each $N\in\N$ and $\eps\in (0,1]$,
\[\sup_{\abs{x}\le N}\abs{f_\eps(x)}(1 + \abs{x})^{-N}\ge \abs{f_\eps(N)}(1 + N)^{-N}\ge \eps^{-N}\left(e^{N}(1+N)^{-1}\right)^{N}\ge\eps^{-N}.\]
Now let $(g_\eps)_\eps$ be an arbitrary representative of $f$. Then for each $N\in\N$, $\sup_{\abs{x}\le N}\abs{g_\eps(x)-f_\eps(x)}\le 1$ for small $\eps$. Hence for each $N\in\N$,
\[
\sup_{x\in\R} \frac{\abs{g_\eps(x)}}{(1 + \abs{x})^N}\ge
\sup_{\abs x\le N} \frac{\abs{f_\eps(x)}}{(1 + \abs{x})^{N}} - \sup_{\abs x\le N} \frac{\abs{g_\eps(x)-f_\eps(x)}}{(1 + \abs{x})^{N}}\ge \eps^{-N} - 1,
\]
for small $\eps$. Hence $(g_\eps)_\eps\notin{\mathcal E}_\tau(\R)$.
\end{proof}

\begin{rem}
The following notion of homogeneous generalized function $f=[(f_\eps)_\eps]\in\Gen(\R^d)$ was pointed out to us by V.~Shelkovich:
\begin{df}
For $\lambda\in\R^+$, let $H_\lambda(f)(x)=[(f_{\lambda\eps}(\lambda x))_\eps]\in\Gen(\R^d)$ (it is easy to see that this definition does not depend on the representative). Then we call $f$ \defstyle{regularized homogeneous} of degree $\alpha$ iff $H_\lambda(f)=\lambda^\alpha f$ in $\Gen(\R^d)$, $\forall\lambda\in\R^+$.
\end{df}
This notion also achieves good consistency with distributional homogeneity, if the embedding $\iota$: $\test'(\R^d)\to\Gen(\R^d)$ is realized as $\iota(u)=[(u\conv\rho_\eps)_\eps]$ with $\rho_\eps(x)=\eps^{-d} \rho(x/\eps)$, for some $\rho\in\Schwartz(\R^d)$ with the usual moment conditions \cite[\S 1.2.2]{GKOS}. It is easy to see that then $\iota(u)$ is regularized homogeneous of degree $\alpha$ iff $u$ is a homogeneous distribution of degree $\alpha$. Further, the product of two regularized homogeneous generalized functions of degree $\alpha$, resp.\ $\beta$, is regularized homogeneous of degree $\alpha+\beta$ (a property that also holds for strongly, but not for weakly homogeneous generalized functions). Therefore, it is an interesting notion that deserves to be explored. Nevertheless, we believe that the weak homogeneity described in \cite{homog} and in this paper has its value as an intrinsic notion on $\Gen(\R^d)$. For instance, constant generalized functions are not necessarily regularized homogeneous of degree $0$, and a generalized function $f$ representing a homogeneous distribution $u$ in the sense that $\int_{\R^d} f\phi= \pairing{u}{\phi}$, for each $\phi\in\test(\R^d)$, is not necessarily regularized homogeneous (consider $u=\delta$ and $f=[(\rho_{\eps^2})_\eps]$, where $\rho_\eps$ is defined as before).
\end{rem}


\begin{thebibliography}{10}
\bibitem{AR}
A.~Antonevich, Y.~Radyno,
{\em A general method for constructing algebras of generalized functions},
Soviet Math.\ Dokl., 43(3), 680--684 (1991).

\bibitem{Colombeau84}
J.-F.~Colombeau,
{\it New generalized functions and multiplication of distributions},
North-Holland 1984.

\bibitem{DS88}
A.~Delcroix, D.~Scarpalezos,
{\em Asymptotic scales -- asymptotic algebras},
Integral Transf.\ Spec.\ Funct., 6, 181--190 (1988).

\bibitem{DHPV04}
A.~Delcroix, M.~Hasler, S.~Pilipovi\'{c}, V.~Valmorin,
{\em Generalized function algebras as sequence space algebras},
Proc.~A.M.S., 132(7), 2031--2038 (2004).

\bibitem{DS58}
N.~Dunford, J.~Schwartz, {\em Linear operators, Part I: General theory}, Interscience, New York, 1958.

\bibitem{Estrada}
R.~Estrada, R.~Kanwal,
{\em A distributional approach to asymptotics},
2nd edition, Birkh\"auser 2002.

\bibitem{Garetto2005}
C.~Garetto,
{\it Topological structures in Colombeau algebras: topological $\widetilde\C$-modules and duality theory},
Acta Appl.~Math., 88(1), 81--123 (2005).


\bibitem{GKOS}
M.~Grosser, M.~Kunzinger, M.~Oberguggenberger, R.~Steinbauer,
{\it Geometric theory of generalized functions with applications to general relativity},
Kluwer 2001.

\bibitem{homog}
C.~Hanel, E.~Mayerhofer, S.~Pilipovi\'c, H.~Vernaeve,
{\it Homogeneity in generalized function algebras},
J.\ Math.\ Anal.\ Appl., to appear.


\bibitem{NPS}
M.~Nedeljkov, S.~Pilipovi{\'{c}}, D.~Scarpal{\'{e}}zos,
{\em The linear theory of Colombeau generalized functions},
Pitman Research Notes in Math., vol.~385, Longman Scientific {\&}
Technical 1998.

\bibitem{MO92}
M.~Oberguggenberger,
{\em Multiplication of distributions and applications to partial differential equations},
Pitman Research Notes in Math., vol.~259, Longman Scientific \& Technical 1992.

\bibitem{PSV}
S.~Pilipovi{\'c}, D.~Scarpalezos, V.~Valmorin,
{\it Equalities in Algebras of Generalized Functions},
Forum Math., 18(5), 789--801 (2006).

\bibitem{Treves}
F.~Treves,
{\em Topological Vector Spaces, Distributions and Kernels},
Academic Press 1967.
\end{thebibliography}
\end{document}